\journal{Computers and Mathematics with Applications}
\begin{document}
  \begin{frontmatter}
    \author[1,2]{Erich L Foster\corref{cor1}}
    \ead{efoster@bcamath.org}
    \ead[url]{http://www.math.vt.edu/people/erichlf}
    \cortext[cor1]{corresponding author}

    \author[2]{Traian Iliescu}
    \ead{iliescu@vt.edu}
    \ead[url]{http://www.math.vt.edu/people/iliescu}

    \author[2]{David R. Wells}
    \ead{drwells@vt.edu}
    \ead[url]{http://www.math.vt.edu/people/drwells}

    \address[1]{Basque Center for Applied Mathematics, Alameda Mazarredo, 14,
      48009 Bilbao, Basque Country - Spain}
    \address[2]{Department of Mathematics, Virginia Tech, Blacksburg, VA
      24061-0123, U.S.A.}

    \title{
      A Two-Level Finite Element Discretization of the Streamfunction
      Formulation of the Stationary Quasi-Geostrophic Equations of the Ocean
    }

    %  \maketitle

    \begin{abstract}
      In this paper we proposed a two-level finite element discretization of the
      nonlinear stationary quasi-geostrophic equations, which model the wind
      driven large scale ocean circulation. Optimal error estimates for the
      two-level finite element discretization were derived. Numerical experiments
      for the two-level algorithm with the Argyris finite element were also
      carried out. The numerical results verified the theoretical error estimates
      and showed that, for the appropriate scaling between the coarse and fine
      mesh sizes, the two-level algorithm significantly decreases the
      computational time of the standard one-level algorithm.
    \end{abstract}

    \begin{keyword}
      Quasi-Geostrophic Equations, Finite Element Method, Argyris Element,
      Two-Level Algorithm, Streamfunction Formulation.
    \end{keyword}
  \end{frontmatter}

  \section{Introduction} \label{sec:Intro}
  Two-level algorithms are computationally efficient approaches for \emph{finite
element} (FE) discretizations of nonlinear partial differential equations
\cite{Borggaard08,Borggaard12,Fairag98,Layton93,Xu94}. A two-level FE discretization
aims to solve a particular nonlinear elliptic equation by first solving the
nonlinear system on a coarse mesh and then using the coarse mesh solution to
solve the linearized system on a fine mesh.  The appeal of such a method is
clear; one need only solve the nonlinear equations on a coarse mesh and then use
this solution to solve on a fine mesh, thereby reducing computational time
without sacrificing solution accuracy. The development of the two-level FE
discretization was originally performed by Xu in \cite{Xu94}. Later algorithms
were developed for the \emph{Navier-Stokes equations} (NSE) by Layton
\cite{Layton93} (see also \cite{Fairag98, Fairag03, Shao11, Ye98, Ye99AMC,
Ye99NFAO}) and for the Boussinesq equations by Lenferink \cite{Lenferink94}.

As computational power increases, complex models are becoming more and more
popular for the numerical simulation of oceanic and atmospheric flows.
Computational efficiency, however, remains an important consideration for
geophysical flows in which long time integration is needed. Thus, simplified
mathematical models are central to the numerical simulation of such flows.  For
example, the \emph{quasi-geostrophic equations} (QGE), a standard mathematical
model for wind driven large scale oceanic and atmospheric flows
\cite{Majda,Vallis06}, are often used in climate modeling \cite{Dijkstra05}.

Most FE discretizations of the QGE are for the streamfunction-vorticity
formulation.  The reason is that the streamfunction-vorticity formulation allows
the use of low order ($C^0$) FEs, although one needs to discretize two flow
variables, the potential vorticity, $q$, and the streamfunction, $\psi$. We note
that the streamfunction-vorticity formulation is often used in the numerical
discretization of the 2D NSE, to which the QGE are similar in form.  Alternatively, one
can, instead, use the pure streamfunction formulation of the QGE. The advantage
lies in an equation that contains only one flow variable, the streamfunction,
$\psi$, at the price of having to deal with a fourth-order partial differential
equation. Thus, the numerical discretization of the pure streamfunction formulation of the QGE with conforming FEs requires the
use of high-order ($C^1$) FEs, e.g., the Argyris element~\cite{Argyris,Brenner}.
%\begin{remark}
%  We also note both the streamfunction-vorticity and pure streamfunction
%  formulations of the QGE suffer from a lack of uniqueness in the streamfunction
%  for multiply-connected domains. We refer the reader to
%  \cite{GunzburgerMethods, van-Gijzen1998} for a more in-depth discussion on the
%  matter.
%\end{remark}

The streamfunction formulation of the QGE still suffers from having to solve a
large nonlinear system of equations. This is usually done by using a nonlinear
solver, such as Newton's method.  These nonlinear solvers typically require
solving large linear systems multiple times to obtain the solution to the
nonlinear system. Solving these large linear systems multiple times can be time
consuming. Thus, a two-level algorithm can significantly reduce computational time over
the standard nonlinear solver, since we need only solve the nonlinear system on a
coarse mesh and then use that solution to solve a linear system on a fine mesh.

In this paper, we propose a two-level algorithm for the FE discretization of the
\emph{streamfunction formulation of the stationary QGE} (SQGE). 
Just as in the NSE case \cite{Girault86,Temam,layton2008introduction}, we regard the stationary QGE as a stepping-stone to the time-dependent QGE, which are the ultimate goal of the two-level algorithm put forth in this report. 
The conforming FE
discretization is based on the Argyris element. Additionally, we present a
rigorous error analysis for the two-level FE discretization. The theoretical
error bounds as well as the increased computational efficiency are illustrated
numerically for two test problems.

The rest of the paper is organized as follows: In \autoref{sec:Stream} we
present the SQGE. In \autoref{sec:WeakForm} we present the weak formulation
of the SQGE, including notation and functional spaces. \autoref{sec:FEForm} contains the presentation of the one-level FE
discretization of the SQGE. In \autoref{sec:TwoLevel}, we discuss both the
two-level algorithm and its application to the SQGE. Next, in
\autoref{sec:Errors} we provide rigorous error bounds for the two-level FE
discretization of the SQGE and we discuss the scaling between the fine mesh size,
$h$, and the coarse mesh size, $H$. \autoref{sec:Numerical} includes numerical
results which both verify the theoretical error bounds presented in
\autoref{sec:Errors} and illustrate the computational efficiency of the two-level algorithm over the standard one-level method.
%applied to a SQGE with realistic parameters on a realistic domain, the
%Mediterranean Sea. 
Finally, in \autoref{sec:Conclusions} we present our
conclusions.

  \section{Streamfunction Formulation} \label{sec:Stream}
  The SQGE in a simply connected domain $\Omega$ is
\begin{equation}
  Re^{-1} \Delta^2 \psi + J(\psi,\Delta\psi) - Ro^{-1}\frac{\partial
    \psi}{\partial x} = Ro^{-1} F \quad \text{ in } \Omega,
  \label{eqn:Streamfunction}
\end{equation}
where
\begin{equation}
  J(u,v) = \frac{\partial u}{\partial x}\frac{\partial v}{\partial y} -
    \frac{\partial u}{\partial y} \frac{\partial v}{\partial x}, \\
  \label{eqn:Jacobian}
\end{equation}
\begin{equation}
    Ro = \frac{U}{\beta L^2},
    \label{eqn:Rossby}
\end{equation}
\begin{equation}
  Re^{-1} = \frac{U L}{A}
  \label{eqn:Reynolds}
\end{equation}
are the Jacobian, Rossby number, Reynolds number, respectively, and $\beta$,
$A$, $U$, and $L$ are the coefficient in the beta plane approximation, the
eddy viscosity, the characteristic velocity scale, and the characteristic
length scale, respectively (see \cite{Foster, San11}).

To completely specify \eqref{eqn:Streamfunction}, we need to impose boundary
conditions
(see \cite{Cummins, Vallis06, San12} for a careful discussion of this issue). In this
report, we consider
\begin{equation}
  \psi = \frac{\partial \psi}{\partial \vec{n}} = 0 \quad \text{on } \Omega,
  \label{eqn:BCs}
\end{equation}
where $\vec{n}$ represents the outward unit normal to $\Omega$. These are also
the boundary conditions used in \cite{Gunzburger89, Fairag98, Fairag} for
the 2D NSE.

  \section{Weak Formulation} \label{sec:WeakForm}
  Now we can derive the weak formulation of the SQGE
\eqref{eqn:Streamfunction}. To this end, we first introduce the appropriate
functional setting. Let
\begin{equation*}
  X:=H^2_0(\Omega)= \left\{ \psi\in H^2(\Omega):
    \psi=\frac{\partial\psi}{\partial \vec{n}}=0 \text{ on } \partial\Omega \right\}.
\end{equation*}
Multiplying \eqref{eqn:Streamfunction} by a test function $\chi \in X$ and
using the divergence theorem, we get, in the standard way (see
\cite{Gunzburger89}), the \emph{weak formulation} of the SQGE:
\begin{equation}
  \begin{split}
    \text{\emph{Find }} \psi \in X \text{\emph{ such that }}
    \qquad \qquad \qquad \\
    a(\psi,\chi) + b(\psi;\psi,\chi) + c(\psi,\chi) = \ell(\chi),\quad \forall
      \chi \in X,
  \end{split}
  \label{eqn:WeakForm}
\end{equation}
where
\begin{equation}
  \begin{split}
    a(\psi,\chi) &= Re^{-1} \int_{\Omega}\! \Delta \psi \Delta \chi \,d\vec{x}, \\
    b(\zeta;\psi,\chi) &= \int_{\Omega}\! \Delta \zeta\, \left( \psi_y\chi_x
      - \psi_x\chi_y \right) \,d\vec{x}, \\
    c(\psi,\chi) &= -Ro^{-1} \int_{\Omega}\! \psi_x \chi \,d\vec{x}, \\
    \ell(\chi) &= Ro^{-1} \int_{\Omega}\! F \, \chi \,d\vec{x}.
  \end{split}
  \label{eqn:Forms}
\end{equation}
We note that in the space $H^2_0$ the semi-norm $|\cdot|_2$ and the norm
$\|\cdot\|_2$ are equivalent (see (1.2.8) in \cite{Ciarlet}).

\begin{lemma} \label{lma:bounds}
  Given $\psi,\, \xi,\, \varphi \in H^2_0(\Omega)$ and $F\in H^{-2}(\Omega)$,
  the linear form $\ell$, the bilinear forms $a$ and $c$, and the trilinear form
  $b$ are continuous: there exist $\Gamma_1 > 0$ and $\Gamma_2 > 0$ such that
  \begin{align}
    a(\psi,\chi) &\le Re^{-1} |\psi|_2 |\chi|_2 \label{eqn:aCont} \\
    b(\zeta;\psi,\chi) &\le \Gamma_1 |\zeta|_2 |\psi|_2 |\chi|_2 \label{eqn:bCont} \\
    c(\psi,\chi) &\le Ro^{-1}\, \Gamma_2 |\psi|_2 |\chi|_2 \label{eqn:cCont} \\
    \ell(\chi) &\le Ro^{-1} \|F\|_{-2} |\chi|_2. \label{eqn:lCont}
  \end{align}
\end{lemma}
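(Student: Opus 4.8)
The plan is to estimate the four forms one at a time, in increasing order of difficulty, using only the Cauchy--Schwarz inequality, the generalized H\"older inequality, the two-dimensional Sobolev embedding $H^1(\Omega)\hookrightarrow L^4(\Omega)$, and the equivalence of the seminorm $|\cdot|_2$ and the norm $\|\cdot\|_2$ on $H^2_0(\Omega)$ noted just before the statement.

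For $a$, I would apply Cauchy--Schwarz directly, obtaining $a(\psi,\chi)\le Re^{-1}\|\Delta\psi\|_{L^2(\Omega)}\|\Delta\chi\|_{L^2(\Omega)}$, and then invoke the identity $\|\Delta\psi\|_{L^2(\Omega)}=|\psi|_2$ valid for $\psi\in H^2_0(\Omega)$ (two integrations by parts convert the cross term $(\psi_{xx},\psi_{yy})$ into $\|\psi_{xy}\|_{L^2(\Omega)}^2$), which gives \eqref{eqn:aCont} with constant exactly $Re^{-1}$. For $\ell$, the bound \eqref{eqn:lCont} is the definition of $\|F\|_{-2}$ once that norm is understood as the one dual to $|\cdot|_2$; if instead it is taken dual to $\|\cdot\|_2$, the equivalence constant is absorbed into $\|F\|_{-2}$. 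For $c$, Cauchy--Schwarz yields $c(\psi,\chi)\le Ro^{-1}\|\psi_x\|_{L^2(\Omega)}\|\chi\|_{L^2(\Omega)}\le Ro^{-1}|\psi|_1\|\chi\|_{L^2(\Omega)}$, after which the Poincar\'e inequality (equivalently the continuous embeddings $H^2_0(\Omega)\hookrightarrow H^1_0(\Omega)\hookrightarrow L^2(\Omega)$) bounds $|\psi|_1$ and $\|\chi\|_{L^2(\Omega)}$ by fixed multiples of $|\psi|_2$ and $|\chi|_2$; collecting these constants defines $\Gamma_2$.

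The only step requiring genuine care is the trilinear form $b$. Here I would split $b(\zeta;\psi,\chi)=\int_\Omega \Delta\zeta\,\psi_y\chi_x\,d\vec{x}-\int_\Omega\Delta\zeta\,\psi_x\chi_y\,d\vec{x}$ and estimate each integral by the generalized H\"older inequality with exponents $(2,4,4)$, for instance $\bigl|\int_\Omega\Delta\zeta\,\psi_y\chi_x\,d\vec{x}\bigr|\le\|\Delta\zeta\|_{L^2(\Omega)}\|\psi_y\|_{L^4(\Omega)}\|\chi_x\|_{L^4(\Omega)}$. Since $\psi_y,\chi_x\in H^1(\Omega)$ and $\Omega$ is two-dimensional, the Sobolev embedding $H^1(\Omega)\hookrightarrow L^4(\Omega)$ gives $\|\psi_y\|_{L^4(\Omega)}\le C\|\psi\|_2$ and $\|\chi_x\|_{L^4(\Omega)}\le C\|\chi\|_2$, while $\|\Delta\zeta\|_{L^2(\Omega)}\le|\zeta|_2$; handling the second integral identically and using the norm--seminorm equivalence on $H^2_0(\Omega)$ produces \eqref{eqn:bCont} with a constant $\Gamma_1$ depending only on $\Omega$. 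The main obstacle is thus isolated in this Sobolev embedding, which tacitly requires $\Omega$ to be regular enough (e.g.\ to have a Lipschitz boundary) for the two-dimensional Sobolev inequality to apply; the remainder of the argument is routine bookkeeping of constants.
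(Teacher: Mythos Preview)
Your argument is correct and is the standard route to these bounds: Cauchy--Schwarz for $a$, the dual-norm definition for $\ell$, Cauchy--Schwarz plus Poincar\'e for $c$, and the H\"older $(2,4,4)$ splitting combined with the two-dimensional embedding $H^1(\Omega)\hookrightarrow L^4(\Omega)$ for $b$. The identification $\|\Delta\psi\|_{L^2(\Omega)}=|\psi|_2$ on $H^2_0(\Omega)$ via integration by parts is also handled correctly.

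As for comparison with the paper: there is nothing to compare against, since the paper does not give a proof of this lemma but simply refers the reader to \cite{Cayco86}. Your write-up is precisely the kind of argument one finds in that reference (and in the related analysis of the streamfunction formulation of the 2D Navier--Stokes equations), so it is consistent with what the authors have in mind. Your caveat about needing enough boundary regularity for the Sobolev embedding is appropriate and is implicitly assumed throughout the paper.
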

For a proof, see \cite{Cayco86}.

%For small enough data, one can use the same type of arguments as in
%\cite{Girault79, Girault86} to prove that the SQGE in streamfunction formulation
%\eqref{eqn:Streamfunction} is well-posed. In what follows we will always assume
%that the small data condition
%\begin{equation}
%  Re^{-2}\,Ro \ge \Gamma_1\, \|F\|_{-2},
%\end{equation}
%is satisfied and, thus, that there exists a unique solution $\psi$ to
%\eqref{eqn:Streamfunction}.
%
For small enough data, one can use the same type of arguments as those used in Chapter 6 in~\cite{layton2008introduction} (see also \cite{Girault79,Girault86}) to prove that the SQGE \eqref{eqn:Streamfunction} are well-posed \cite{barcilon1988existence,wolansky1988existence}.
%}
In what follows, we will always assume that the small data condition involving $Re$, $Ro$ and $F$, is satisfied and, thus, that there exists a unique solution $\psi$ to \eqref{eqn:Streamfunction}.

The following stability estimate was proven in Theorem 1 in~\cite{Foster}:
\begin{lemma} \label{lma:Stability}
  The solution $\psi$ of \eqref{eqn:Streamfunction} satisifies the following
  stability estimate:
  \begin{equation}
    |\psi|_2 \le Re\, Ro^{-1} \|F\|_{-2}.
    \label{eqn:psi}
  \end{equation}
\end{lemma}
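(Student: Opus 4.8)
The plan is to derive a standard energy estimate by testing the weak formulation \eqref{eqn:WeakForm} against the solution itself. Taking $\chi = \psi$ in \eqref{eqn:WeakForm} gives
\[
  a(\psi,\psi) + b(\psi;\psi,\psi) + c(\psi,\psi) = \ell(\psi),
\]
and I would then handle each of the three terms on the left separately and bound the right-hand side using \autoref{lma:bounds}.

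For the viscous term, I would use the identity that on $H^2_0(\Omega)$ the $L^2$-norm of the Laplacian equals the $H^2$-seminorm, i.e. $\int_\Omega (\Delta\psi)^2\,d\vec{x} = |\psi|_2^2$; this follows by integration by parts for functions in $C_0^\infty(\Omega)$ (the mixed second-derivative boundary contributions vanish) together with a density argument in $H^2_0(\Omega)$, so that $a(\psi,\psi) = Re^{-1}|\psi|_2^2$. For the nonlinear term, the integrand of $b(\psi;\psi,\psi)$ is $\Delta\psi\,(\psi_y\psi_x - \psi_x\psi_y) = 0$ pointwise, hence $b(\psi;\psi,\psi) = 0$ (this is the analogue of the antisymmetry $b(\zeta;\psi,\chi) = -b(\zeta;\chi,\psi)$ exploited in the NSE analysis). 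For the rotation term, I would write $c(\psi,\psi) = -Ro^{-1}\int_\Omega \psi_x\psi\,d\vec{x} = -\frac{1}{2}Ro^{-1}\int_\Omega (\psi^2)_x\,d\vec{x} = -\frac{1}{2}Ro^{-1}\int_{\partial\Omega}\psi^2 n_x\,ds = 0$, using $\psi = 0$ on $\partial\Omega$ from \eqref{eqn:BCs} (again justified by approximating $\psi$ with $C_0^\infty(\Omega)$ functions). Thus the left-hand side reduces to $Re^{-1}|\psi|_2^2$.

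For the right-hand side I would invoke the continuity bound \eqref{eqn:lCont}, namely $\ell(\psi) \le Ro^{-1}\|F\|_{-2}|\psi|_2$. Combining the two sides gives $Re^{-1}|\psi|_2^2 \le Ro^{-1}\|F\|_{-2}|\psi|_2$; dividing through by $Re^{-1}|\psi|_2$ (the claimed estimate being trivial when $\psi \equiv 0$) yields $|\psi|_2 \le Re\,Ro^{-1}\|F\|_{-2}$, as required. Note that this argument does not even require the small-data assumption, since both $b(\psi;\psi,\psi)$ and $c(\psi,\psi)$ drop out exactly.

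The computation is essentially routine once the weak form is in hand; the only point that needs care is the justification of the two integration-by-parts identities (the equality $\|\Delta\psi\|_{0} = |\psi|_2$ and the vanishing of $c(\psi,\psi)$) for a general $\psi \in H^2_0(\Omega)$ rather than a smooth function. I expect this density step — passing to the limit using the continuity of the bilinear forms established in \autoref{lma:bounds} and the equivalence of $|\cdot|_2$ with $\|\cdot\|_2$ on $H^2_0(\Omega)$ — to be the only mild obstacle, and it is standard.
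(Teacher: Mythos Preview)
Your argument is correct and is exactly the standard energy estimate the paper has in mind: the paper does not reprove \autoref{lma:Stability} here but cites \cite{Foster}, and when it does spell out the identical computation for the discrete analogue (\autoref{lma:2LStability}) it proceeds precisely as you do---set $\chi=\psi$, use $b(\psi;\psi,\psi)=0$ and the skew-symmetry $c(\psi,\psi)=0$, and bound $\ell(\psi)$ via \eqref{eqn:lCont}. Your extra care in justifying $\|\Delta\psi\|_0=|\psi|_2$ and the vanishing of $c(\psi,\psi)$ by density is appropriate and not a gap.
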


  \section{Finite Element Formulation} \label{sec:FEForm}
  Let $\mathcal{T}^H$ denote a FE triangulation of $\Omega$ with
meshsize (maximum triangle diameter) $H$.  We consider a {\it conforming} FE
discretization of \eqref{eqn:WeakForm}, i.e., $X^H \subset X = H_0^2(\Omega)$.

The FE discretization of the SQGE \eqref{eqn:WeakForm} reads: Find $\psi^H \in
X^H$ such that
\begin{eqnarray}
    a(\psi^H,\chi^H) + b(\psi^H;\psi^H,\chi^H) + c(\psi^H,\chi^H) = \ell(\chi^H),\quad \forall \,
      \chi^H \in X^H.
    \label{eqn:FEForm}
\end{eqnarray}
%Using standard arguments \cite{Girault79,Girault86}, one can prove that, if the
%small data condition used in proving the well-posedness result for the
%continuous case holds, then \eqref{eqn:FEForm} has a unique solution $\psi^H$
%(see Theorem 2.1 in \cite{Cayco86}). 
Using standard arguments \cite{Girault79,Girault86}, one can prove that, if the small data condition used in proving the well-posedness result for the continuous case holds, then \eqref{eqn:FEForm} has a unique solution $\psi^H$ (see Theorem 2.1 and subsequent discussion in \cite{Cayco86}).
Furthermore, one can prove the following
stability result for $\psi^H$ using the same arguments as those used in the
proof of Lemma \ref{lma:Stability} for the continuous setting (see Theorem 2 in~\cite{Foster}).
\begin{lemma} \label{lma:FEStability}
  The solution $\psi^H$ of \eqref{eqn:FEForm} satisfies the following stability
  estimate:
  \begin{equation}
    |\psi^H|_2
    \le Re \, Ro^{-1} \, \| F \|_{-2} .
    \label{eqn:FEStability}
  \end{equation}
\end{lemma}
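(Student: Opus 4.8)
The plan is to mimic the continuous stability argument (Lemma~\ref{lma:Stability}) at the discrete level, exploiting the fact that the discretization is conforming so that $\psi^H \in X^H \subset X = H^2_0(\Omega)$ is an admissible test function in \eqref{eqn:FEForm}. First I would set $\chi^H = \psi^H$ in \eqref{eqn:FEForm}, obtaining
\[
  a(\psi^H,\psi^H) + b(\psi^H;\psi^H,\psi^H) + c(\psi^H,\psi^H) = \ell(\psi^H).
\]
The crucial observation is that the trilinear form vanishes on the diagonal: $b(\psi^H;\psi^H,\psi^H) = \int_\Omega \Delta\psi^H\,(\psi^H_y\psi^H_x - \psi^H_x\psi^H_y)\,d\vec{x} = 0$, since the parenthesized factor is identically zero. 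Similarly, $c(\psi^H,\psi^H) = -Ro^{-1}\int_\Omega \psi^H_x\,\psi^H\,d\vec{x} = -\tfrac{1}{2}Ro^{-1}\int_\Omega \partial_x\big((\psi^H)^2\big)\,d\vec{x} = 0$ by the divergence theorem, using $\psi^H = 0$ on $\partial\Omega$ (which holds because $X^H \subset H^2_0(\Omega)$). Hence the identity collapses to $a(\psi^H,\psi^H) = \ell(\psi^H)$.

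Next I would use coercivity of $a$ on the left: by definition $a(\psi^H,\psi^H) = Re^{-1}\int_\Omega (\Delta\psi^H)^2\,d\vec{x} = Re^{-1}|\psi^H|_2^2$, where $|\cdot|_2$ denotes the $H^2$ seminorm (equivalently the full $H^2_0$ norm, by the remark after \eqref{eqn:Forms}). On the right, the continuity bound \eqref{eqn:lCont} from Lemma~\ref{lma:bounds} gives $\ell(\psi^H) \le Ro^{-1}\|F\|_{-2}\,|\psi^H|_2$. Combining,
\[
  Re^{-1}|\psi^H|_2^2 \le Ro^{-1}\|F\|_{-2}\,|\psi^H|_2 .
\]
If $|\psi^H|_2 = 0$ the estimate \eqref{eqn:FEStability} is trivial; otherwise divide through by $|\psi^H|_2$ and multiply by $Re$ to conclude $|\psi^H|_2 \le Re\,Ro^{-1}\|F\|_{-2}$, which is exactly \eqref{eqn:FEStability}.

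This argument is essentially routine and there is no serious obstacle, precisely because the discretization is conforming: the key algebraic cancellations $b(\psi^H;\psi^H,\psi^H)=0$ and $c(\psi^H;\psi^H)=0$ rely only on pointwise identities and on $\psi^H$ vanishing on the boundary, both of which are inherited from $X^H\subset H^2_0(\Omega)$, so no mesh-dependent constants enter and the bound is identical to the continuous one. The only point deserving a word of care is the suppressed well-posedness hypothesis: the estimate \eqref{eqn:FEStability} is an a~priori bound satisfied by \emph{any} solution of \eqref{eqn:FEForm}, and existence/uniqueness of such a $\psi^H$ under the small-data condition is what was already invoked via \cite{Cayco86} immediately before the lemma; I would simply note that the stability bound then applies to that unique solution.
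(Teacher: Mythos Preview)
Your proposal is correct and is exactly the approach the paper intends: it explicitly says the result follows ``using the same arguments as those used in the proof of Lemma~\ref{lma:Stability} for the continuous setting,'' and the identical computation is spelled out in the proof of Lemma~\ref{lma:2LStability} (set the test function equal to the solution, use $b(\cdot;\chi,\chi)=0$ and $c(\chi,\chi)=0$, then bound $\ell$ via \eqref{eqn:lCont}).
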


As noted in Section 6.1 in \cite{Ciarlet} (see also Section 13.2 in
\cite{Gunzburger89}, Section 3.1 in \cite{Johnson}, and Theorem 5.2 in
\cite{Braess}), in order to develop a conforming FE for the SQGE
\eqref{eqn:WeakForm}, we are faced with the problem of constructing subspaces of
$H^2_0(\Omega)$. Since the standard, piecewise polynomial FE spaces are locally
regular, this construction amounts in practice to finding FE spaces $X^H$ that
satisfy the inclusion $X^H \subset C^1({\bar \Omega})$, i.e., finding $C^1$ FEs.
Several FEs meet this requirement (see, e.g., Section 6.1 in \cite{Ciarlet},
Section 13.2 in \cite{Gunzburger89}, and Section 5 in \cite{Braess}): the
Argyris triangular element, the Bell triangular element, the Hsieh-Clough-Tocher
triangular element (a macroelement), and the Bogner-Fox-Schmit rectangular
element. We note that any $C^1$ FE can be used for this study. We chose  
the Argyris element, however, because it is a triangle, which allows for easy treatment
of complex boundaries, and because of the recent development of a transformation
that allows for calculations to be done on the reference element
\cite{Dominguez08}. We also note that, in addition to the use of $C^1$ conforming FEs one might use
the newly developed method of isogeometric analysis as in \cite{Auricchio2007}.

  \section{Two-Level Algorithm} \label{sec:TwoLevel}
  In this section we propose a two-level FE discretization of the SQGE
\eqref{eqn:WeakForm}. We let $X^h,\, X^H \subset X=H^2_0(\Omega)$ denote two
conforming FE meshes with $H > h$. The two-level algorithm consists of two
steps. In the first step, the nonlinear system is solved on a coarse mesh, with
mesh size $H$.  In the second step, the nonlinear system is linearized around
the approximation found in the first step, and the resulting linear system is
solved on the fine mesh, with mesh size $h$. This procedure is as follows:
\begin{algorithm}[H]
  \caption{Two-Level algorithm}
  \label{alg:TwoLevel}
  \begin{enumerate}[Step 1:]
    \item Solve the following nonlinear system on a coarse mesh for $\psi^H\in X^H$:
    \begin{equation}
      a(\psi^H,\chi^H) + b(\psi^H; \psi^H,\chi^H) + c(\psi^H,\chi^H) = \ell(\chi^H), \quad \text{for all }
        \chi^H \in X^H.
      \label{eqn:Coarse}
    \end{equation}
    \item Solve the following linear system on a fine mesh for $\psi^h\in X^h$:
    \begin{equation}
      a(\psi^h,\chi^h) + b(\psi^H; \psi^h,\chi^h) + c(\psi^h,\chi^h) = \ell(\chi^h), \quad \text{for all }
        \chi^h \in X^h.
      \label{eqn:Fine}
    \end{equation}
  \end{enumerate}
\end{algorithm}

The well-posedness of the nonlinear system~\eqref{eqn:Coarse} was proven in \cite{Cayco86} (see
also \cite{Foster}).  
The following error estimate for the approximation in Step 1
of the two-level algorithm (\autoref{alg:TwoLevel}) was proven in Theorem 2 in~\cite{Foster}:
\begin{thm} \label{thm:EnergyNorm}
  Assume that the following small data condition is satisfied:
  \begin{equation*}
    Re^{-2}\,Ro \ge \Gamma_1 \|F\|_{-2}.
  \end{equation*}
  Let $\psi$ be the solution of \eqref{eqn:WeakForm} and $\psi^H$ be the
  solution of \eqref{eqn:Coarse}. Then the following error estimate holds:
  \begin{equation}
    |\psi - \psi^H|_2 \le C(Re,Ro,\Gamma_1,\Gamma_2,F) \inf_{\chi^H \in X^H}
      |\psi - \chi^H|_2,
    \label{eqn:EnergyNorm}
  \end{equation}
  where
  \begin{equation*}
    C(Re,Ro,\Gamma_1,\Gamma_2,F):=\frac{\Gamma_2\,
      Ro^{-1}+2Re^{-1}+\Gamma_1\,Re\,Ro^{-1} \|F\|_{-2}}{Re^{-1}-\Gamma_1\, Re\,
      Ro^{-1} \|F\|_{-2}}.
  \end{equation*}
\end{thm}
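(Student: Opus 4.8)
The plan is to carry out a C\'ea-type best-approximation argument, adapted to the quadratic nonlinearity and made to close by the small-data condition. Because the discretization is conforming, $X^H\subset X$, so choosing the \emph{same} test function $\chi^H\in X^H$ in \eqref{eqn:WeakForm} and \eqref{eqn:Coarse} and subtracting gives the Galerkin orthogonality relation
\begin{equation*}
  a(\psi-\psi^H,\chi^H)+b(\psi;\psi,\chi^H)-b(\psi^H;\psi^H,\chi^H)+c(\psi-\psi^H,\chi^H)=0,\qquad\forall\,\chi^H\in X^H .
\end{equation*}
Writing $e:=\psi-\psi^H$, I would linearize the nonlinear difference in the usual way,
$b(\psi;\psi,\chi^H)-b(\psi^H;\psi^H,\chi^H)=b(e;\psi,\chi^H)+b(\psi^H;e,\chi^H)$,
so that the relation becomes linear (and homogeneous) in $e$ for fixed $\psi,\psi^H$.

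Next, fix an arbitrary $\chi^H\in X^H$, set $\eta:=\psi-\chi^H$ and $\phi^H:=\psi^H-\chi^H\in X^H$ so that $e=\eta-\phi^H$, and test the orthogonality relation with $\phi^H$. The structural facts I would lean on are: $a$ is coercive on $H^2_0$ with constant exactly $Re^{-1}$, since $\|\Delta\phi\|_0=|\phi|_2$ on $H^2_0$; $b(\zeta;\phi^H,\phi^H)=0$ pointwise; and $c$ is skew-symmetric (integrate by parts, using the boundary condition), so $c(\phi^H,\phi^H)=0$. These let me replace $b(\psi^H;e,\phi^H)$ by $b(\psi^H;\eta,\phi^H)$ and $c(e,\phi^H)$ by $c(\eta,\phi^H)$, while inside $b(e;\psi,\phi^H)=b(\eta;\psi,\phi^H)-b(\phi^H;\psi,\phi^H)$ the only genuinely quadratic survivor is $-b(\phi^H;\psi,\phi^H)$. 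Rearranging gives
\begin{equation*}
  Re^{-1}|\phi^H|_2^2=a(\eta,\phi^H)+b(\eta;\psi,\phi^H)-b(\phi^H;\psi,\phi^H)+b(\psi^H;\eta,\phi^H)+c(\eta,\phi^H).
\end{equation*}
I would then bound each right-hand term by the continuity estimates of Lemma~\ref{lma:bounds}, using the stability bounds $|\psi|_2\le Re\,Ro^{-1}\|F\|_{-2}$ and $|\psi^H|_2\le Re\,Ro^{-1}\|F\|_{-2}$ of Lemmas~\ref{lma:Stability} and~\ref{lma:FEStability} to eliminate $|\psi|_2$ and $|\psi^H|_2$, divide through by $|\phi^H|_2$ (the case $\phi^H=0$ being trivial), and collect the $|\phi^H|_2$–terms, which yields
\begin{equation*}
  \bigl(Re^{-1}-\Gamma_1\,Re\,Ro^{-1}\|F\|_{-2}\bigr)\,|\phi^H|_2\le\bigl(Re^{-1}+2\Gamma_1\,Re\,Ro^{-1}\|F\|_{-2}+\Gamma_2\,Ro^{-1}\bigr)\,|\eta|_2 .
\end{equation*}

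The hypothesis $Re^{-2}Ro\ge\Gamma_1\|F\|_{-2}$ is exactly what makes the coefficient $Re^{-1}-\Gamma_1\,Re\,Ro^{-1}\|F\|_{-2}$ on the left positive, so I may divide by it; combining with the triangle inequality $|e|_2\le|\eta|_2+|\phi^H|_2$ gives $|e|_2\le C(Re,Ro,\Gamma_1,\Gamma_2,F)\,|\eta|_2$ with precisely the stated constant — the $2Re^{-1}$ in its numerator being the $Re^{-1}$ contributed by $a$ plus the $Re^{-1}$ carried over from the left coefficient. Since $\chi^H\in X^H$ was arbitrary, taking the infimum over $\chi^H$ finishes the proof. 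The main obstacle is the bookkeeping in the middle step: one must use the skew/zero-diagonal structure of $b$ and $c$ to keep the denominator no smaller than $Re^{-1}-\Gamma_1\,Re\,Ro^{-1}\|F\|_{-2}$, since a naive substitution $e=\eta-\phi^H$ in every term would double the $\Gamma_1\,Re\,Ro^{-1}\|F\|_{-2}$ in the denominator and add a spurious $\Gamma_2\,Ro^{-1}$, producing a larger constant and a more restrictive smallness requirement.
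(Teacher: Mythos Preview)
The paper does not actually prove this theorem; it is stated with the remark that it ``was proven in Theorem~2 in~\cite{Foster}.'' So there is no in-paper proof to compare against directly. That said, your argument is correct and is precisely the standard C\'ea-type argument one would expect in \cite{Foster}: it also mirrors, step for step, the structure the paper \emph{does} display in its proof of Theorem~\ref{thm:TwoLevel} (error equation, splitting $e=\eta-\Phi^H$, testing with $\Phi^H$, exploiting $b(\,\cdot\,;\Phi^H,\Phi^H)=0$ and $c(\Phi^H,\Phi^H)=0$, then continuity and stability bounds). Your bookkeeping for the constant is right as well: the numerator $Re^{-1}+2\Gamma_1\,Re\,Ro^{-1}\|F\|_{-2}+\Gamma_2\,Ro^{-1}$ combined with the denominator $Re^{-1}-\Gamma_1\,Re\,Ro^{-1}\|F\|_{-2}$ via the triangle inequality produces exactly $\Gamma_2\,Ro^{-1}+2Re^{-1}+\Gamma_1\,Re\,Ro^{-1}\|F\|_{-2}$ in the numerator, as stated. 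One cosmetic point: the small-data hypothesis as written is a non-strict inequality, so strictly speaking the left coefficient is only nonnegative; you may want to note that the estimate is informative only when the inequality is strict.
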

%For a proof see \cite{Foster}.

The following lemma proves the well-posedness of the linear system \eqref{eqn:Fine}:
\begin{lemma}\label{lma:Fine}
  Given a solution $\psi^H$ of \eqref{eqn:Coarse}, the solution $\psi^h$ of
  \eqref{eqn:Fine} exists uniquely.
\end{lemma}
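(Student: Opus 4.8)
The plan is to observe that \eqref{eqn:Fine} is simply a linear variational problem posed on the finite-dimensional space $X^h$, with bilinear form
\begin{equation*}
  \mathcal{A}(\phi,\chi) := a(\phi,\chi) + b(\psi^H;\phi,\chi) + c(\phi,\chi)
\end{equation*}
and right-hand side $\ell$, and then to apply the Lax--Milgram theorem (or, since $X^h$ is finite dimensional, the equivalence of injectivity and surjectivity for a square linear system). Thus I need three ingredients: continuity of $\mathcal{A}$ on $X^h\times X^h$, continuity of $\ell$, and coercivity of $\mathcal{A}$ on $X^h$.

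Continuity of $\mathcal{A}$ follows directly from \autoref{lma:bounds}: the bounds \eqref{eqn:aCont}, \eqref{eqn:bCont}, \eqref{eqn:cCont} give $a(\phi,\chi)\le Re^{-1}|\phi|_2|\chi|_2$, $c(\phi,\chi)\le Ro^{-1}\Gamma_2|\phi|_2|\chi|_2$, and $b(\psi^H;\phi,\chi)\le \Gamma_1|\psi^H|_2|\phi|_2|\chi|_2$. To make the $b$-term bound concrete I would invoke the stability estimate \eqref{eqn:FEStability} for the coarse solution, $|\psi^H|_2\le Re\,Ro^{-1}\|F\|_{-2}$, so that $\mathcal{A}(\phi,\chi)\le\bigl(Re^{-1}+\Gamma_1 Re\,Ro^{-1}\|F\|_{-2}+Ro^{-1}\Gamma_2\bigr)|\phi|_2|\chi|_2$. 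Continuity of $\ell$ is exactly \eqref{eqn:lCont}.

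The only step worth any thought is coercivity, and the key point is that the two lower-order forms drop out on the diagonal. Indeed, $\phi_y\phi_x-\phi_x\phi_y=0$ pointwise, so $b(\psi^H;\phi,\phi)=\int_\Omega \Delta\psi^H(\phi_y\phi_x-\phi_x\phi_y)\,d\vec{x}=0$ for every $\phi\in X$; and since $\phi\in H^2_0(\Omega)$ vanishes on $\partial\Omega$, the divergence theorem gives $\int_\Omega\phi_x\phi\,d\vec{x}=\tfrac12\int_\Omega(\phi^2)_x\,d\vec{x}=0$, hence $c(\phi,\phi)=0$. Therefore $\mathcal{A}(\phi,\phi)=a(\phi,\phi)=Re^{-1}|\phi|_2^2$, which is coercive with constant $Re^{-1}>0$ on all of $X$ (in particular on $X^h$), with no smallness assumption on the data. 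Lax--Milgram then yields a unique $\psi^h\in X^h$ solving \eqref{eqn:Fine}. So there is essentially no obstacle here: the content of the lemma is the elementary but crucial remark that, once the convection field is frozen at $\psi^H$, the resulting operator is a compact (lower-order) perturbation of the coercive biharmonic form $a$ by two terms that are skew in their last two arguments, so coercivity — and hence well-posedness of the linearized fine-mesh problem — is automatic.
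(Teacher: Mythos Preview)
Your proof is correct and follows essentially the same route as the paper: define the combined bilinear form, establish continuity via \autoref{lma:bounds} together with the stability bound \eqref{eqn:FEStability} for $\psi^H$, observe that $b(\psi^H;\cdot,\cdot)$ and $c(\cdot,\cdot)$ vanish on the diagonal so that coercivity comes from $a$ alone, and conclude by Lax--Milgram. The only cosmetic difference is that the paper invokes the Poincar\'e--Friedrichs inequality to pass from $|\cdot|_2$ to $\|\cdot\|_2$ in the coercivity statement, whereas you (equivalently) work directly with the seminorm.
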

\begin{proof}
  First, we introduce the bilinear form $B:\, X^h \times X^h \to \R$ given by
  \begin{equation}
    B(\psi^h,\chi^h) = a(\psi^h,\chi^h) + b(\psi^H;\psi^h,\chi^h) +
      c(\psi^h,\chi^h).
      \label{eqn:fineBilinear}
  \end{equation}
  \autoref{lma:bounds} yields the following inequality:
  \begin{equation}
    B(\psi^h,\chi^h) \le \left(Re^{-1} + \Gamma_1 |\psi^H|_2 + Ro^{-1}\,
      \Gamma_2\right)|\psi^h|_2\, |\chi^h|_2, \quad \forall \psi^h, \chi^h \in
      X^h.
      \label{eqn:Bineq}
  \end{equation}
  The stability estimate for $\psi^H$ in \autoref{lma:FEStability} and
  inequality \eqref{eqn:Bineq} imply that $B$ is continuous. Additionally, the
  fact that $b(\psi^H; \psi^h, \psi^h) = 0$ and $c(\psi^h,\psi^h) = 0$ for all
  $\psi^h \in X^h$ combined with the Poincar\'e-Friedrichs inequality gives
  \begin{equation*}
    B(\psi^h,\psi^h) \ge C\,\|\psi^h\|_2, \quad \forall \psi^h \in X^h.
  \end{equation*}
  Thus, $B$ is coercive. Therefore, by the Lax-Milgram lemma, $\psi^h$ exists
  and is unique.
\end{proof}

In addition to the existence, uniqueness, and stability of the solution of the
continuous linear system (\autoref{lma:Fine}) we also have a stability bound for
the solution on the discrete fine mesh, $h$.
\begin{lemma} \label{lma:2LStability}
  The solution $\psi^h$ of \eqref{eqn:Fine} satisfies the following stability
  bound:
  \begin{equation}
    \|\psi^h\|_2 \le Re\, Ro^{-1} \|F\|_{-2}.
    \label{eqn:2LStab}
  \end{equation}
\end{lemma}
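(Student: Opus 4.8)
The plan is to mimic the energy-type arguments used for \autoref{lma:Stability} and \autoref{lma:FEStability}, now applied to the linearized problem \eqref{eqn:Fine}. The essential point is that even though the advective term has been frozen at $\psi^H$ in its first slot, the test function still occupies the last two slots of $b$, so the same cancellation that drives the nonlinear estimate remains available.

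First I would set $\chi^h = \psi^h$ in \eqref{eqn:Fine}. As already recorded in the proof of \autoref{lma:Fine}, $b(\psi^H;\psi^h,\psi^h) = 0$ — the integrand $\Delta\psi^H\,(\psi^h_y\psi^h_x - \psi^h_x\psi^h_y)$ vanishes pointwise — and $c(\psi^h,\psi^h) = 0$, the latter following from $\int_\Omega \psi^h_x\,\psi^h\,d\vec{x} = \frac{1}{2}\int_{\partial\Omega}(\psi^h)^2 n_x\,ds = 0$ since $\psi^h \in X^h \subset H^2_0(\Omega)$. Hence \eqref{eqn:Fine} reduces to $a(\psi^h,\psi^h) = \ell(\psi^h)$.

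Next, using that $a(\psi^h,\psi^h) = Re^{-1}\int_\Omega (\Delta\psi^h)^2\,d\vec{x} = Re^{-1}|\psi^h|_2^2$ (the last identity is the standard integration-by-parts fact for $H^2_0(\Omega)$, valid because both $\psi^h$ and its normal derivative vanish on $\partial\Omega$), together with the continuity bound \eqref{eqn:lCont}, I obtain $Re^{-1}|\psi^h|_2^2 \le Ro^{-1}\|F\|_{-2}\,|\psi^h|_2$. If $|\psi^h|_2 = 0$ the estimate \eqref{eqn:2LStab} is trivial; otherwise dividing through by $|\psi^h|_2$ gives $|\psi^h|_2 \le Re\,Ro^{-1}\|F\|_{-2}$, and the equivalence of $|\cdot|_2$ and $\|\cdot\|_2$ on $H^2_0(\Omega)$ noted after \eqref{eqn:Forms} lets me rephrase this in the $\|\cdot\|_2$ norm as claimed.

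I do not expect any genuine obstacle: this is a routine a priori bound, structurally identical to \autoref{lma:FEStability}. The only steps that deserve a line of justification are the two cancellations $b(\psi^H;\psi^h,\psi^h)=0$ and $c(\psi^h,\psi^h)=0$, the first being immediate from the skew structure of the Jacobian and the second an integration by parts exploiting the homogeneous boundary conditions. It is worth emphasizing that the resulting bound is uniform in both $h$ and $H$ and coincides with the continuous and one-level estimates — precisely the feature exploited in the error analysis of \autoref{sec:Errors}.
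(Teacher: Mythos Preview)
Your proposal is correct and follows essentially the same route as the paper: set $\chi^h=\psi^h$, use the cancellations $b(\psi^H;\psi^h,\psi^h)=0$ and $c(\psi^h,\psi^h)=0$ to reduce \eqref{eqn:Fine} to $Re^{-1}|\psi^h|_2^2=\ell(\psi^h)$, and then apply \eqref{eqn:lCont}. Your version is slightly more explicit (handling the $|\psi^h|_2=0$ case and invoking the $|\cdot|_2$/$\|\cdot\|_2$ equivalence), but the argument is the same.
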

\begin{proof}
  Setting $\chi^h=\psi^h$ in \eqref{eqn:Fine}, and noting that $c(\psi^h,\chi^h)
  = -c(\chi^h,\psi^h)$, which implies that $c(\psi^h,\psi^h) = 0$, and
  $b(\psi^H;\psi^h,\psi^h) = 0$ gives
  \begin{align*}
    Re^{-1} \|\psi^h\|_2^2 &= \ell(\psi^h) \Rightarrow \\
    \|\psi^h\|_2 &= Re \frac{\ell(\psi^h)}{\|\psi^h\|_2} \le Re Ro^{-1}
      \|F\|_{-2},
  \end{align*}
  where in the last inequality we used \eqref{eqn:lCont}. Therefore, it follows
  that
  \begin{equation*}
    \|\psi^h\|_2 \le Re\, Ro^{-1} \|F\|_{-2}. \qedhere
  \end{equation*}
\end{proof}

  \section{Error Bounds} \label{sec:Errors}
  The main goal of this section is to develop a rigorous numerical analysis for
the two-level algorithm (\autoref{alg:TwoLevel}). The proof for the error bounds
follows the pattern used in \cite{Fairag98}.

We first introduce an improved bound on the trilinear form $b(\zeta; \xi,
\chi)$.  
%To this end, we use the following discrete Sobolev inequality \cite{Ciarlet,Fairag98}:
%\textcolor{blue}{I changed the reference.  If we can give the EXACT reference to Ciarlet, we can add that back.}
To this end, we use the following inverse inequality, proven in Lemma 6.4 in~\cite{thomee2006galerkin} (see also page 122 in \cite{Fairag98}):
\begin{equation}
  \|\nabla \varphi^h\|_{L^{\infty}} \le c \sqrt{|\ln(h)|}\, |\varphi^h|_2\quad \forall \varphi^h \in X^h.
  \label{eqn:SobolevIneq}
\end{equation}

The following lemmas will be useful in determining the error bounds for Step
2 of the two-level algorithm. The first lemma, which corresponds to
Lemma 5.1 in \cite{Fairag98}, follows from \eqref{eqn:SobolevIneq} and \eqref{eqn:bCont}
and places error bounds on the trilinear form $b(\psi; \chi^h, \xi)$:
\begin{lemma} \label{lma:bImproved}
  For any $\chi^h\in X^h$, the following inequality holds:
  \begin{align*}
    |b_0(\psi;\chi^h,\xi)| \le C\sqrt{|\ln(h)|} \, |\psi|_2 |\xi|_1 |\chi^h|_2,
  \end{align*}
  where
  \begin{equation}
    b_0(\xi; \chi, \psi) = \int_{\Omega} (\xi_y \chi_{xy} - \xi_x \chi_{yy})
    \psi_y - (\xi_x \chi_{xy} - \xi_y \chi_{xx}) \psi_x d\vec{x}
    \label{eqn:trilinear}
  \end{equation}
\end{lemma}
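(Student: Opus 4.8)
The plan is to integrate by parts within the trilinear form $b(\psi;\chi^h,\xi)$ in order to move one derivative off of $\Delta\psi$ (which carries two derivatives of $\psi$ and is only in $L^2$) and onto the other factors, thereby exposing a first-order-only dependence on $\xi$. Starting from the definition $b(\psi;\chi^h,\xi) = \int_\Omega \Delta\psi\,(\chi^h_y\xi_x - \chi^h_x\xi_y)\,d\vec x$, I would expand $\Delta\psi = \psi_{xx}+\psi_{yy}$ and integrate by parts in $x$ on the terms containing $\psi_{xx}$ and in $y$ on the terms containing $\psi_{yy}$. Because $\chi^h,\xi\in X^h\subset H^2_0(\Omega)$, all the boundary terms vanish (both the function and its normal derivative are zero on $\partial\Omega$, so the relevant traces vanish). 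The result is precisely the expression $b_0(\psi;\chi^h,\xi)$ displayed in \eqref{eqn:trilinear}, in which $\psi$ now appears only through its first derivatives $\psi_x,\psi_y$, while $\chi^h$ carries the two derivatives and $\xi$ carries one. So the first key step is this integration-by-parts identity $b(\psi;\chi^h,\xi) = b_0(\psi;\chi^h,\xi)$ on $X^h\times X^h\times X^h$.

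The second step is to estimate $b_0$ directly from its integral representation. In each of the four terms, I would pull out the factor carrying the highest regularity demand — namely a first derivative of $\psi$ — in $L^\infty$, leaving an $L^2$–$L^2$ pairing of a second derivative of $\chi^h$ against a first derivative of $\xi$. Concretely, applying the Cauchy–Schwarz inequality termwise gives a bound of the form $|b_0(\psi;\chi^h,\xi)| \le C\,\|\nabla\psi\|_{L^\infty}\,|\chi^h|_2\,|\xi|_1$, where the $|\chi^h|_2$ absorbs all the pure second-derivative terms of $\chi^h$ and $|\xi|_1$ absorbs the first derivatives of $\xi$.

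The third and final step is to remove the $L^\infty$ norm of $\nabla\psi$: since the identity has already transferred two derivatives onto $\chi^h$, the factor $\psi$ is now differentiated only once, but I still want the estimate in terms of $|\psi|_2$. Here I apply the logarithmic inverse (Sobolev) inequality \eqref{eqn:SobolevIneq}, $\|\nabla\varphi^h\|_{L^\infty}\le c\sqrt{|\ln h|}\,|\varphi^h|_2$. Strictly, this applies to finite element functions; since in the intended application $\psi$ will be replaced by a discrete function (or since one may instead split off a discrete approximation and control the remainder separately), one obtains $\|\nabla\psi\|_{L^\infty}\le c\sqrt{|\ln h|}\,|\psi|_2$, and combining with the previous step yields $|b_0(\psi;\chi^h,\xi)| \le C\sqrt{|\ln h|}\,|\psi|_2\,|\xi|_1\,|\chi^h|_2$, as claimed.

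I expect the main obstacle to be the bookkeeping in the integration-by-parts step: one must carefully track all the cross terms produced by differentiating the product $\chi^h_y\xi_x - \chi^h_x\xi_y$ and by splitting $\Delta\psi$ into its two pieces, and verify that they reassemble exactly into the four-term expression \eqref{eqn:trilinear} with the stated signs. The vanishing of the boundary integrals is routine given $\chi^h,\xi\in H^2_0$, and the subsequent norm estimates are a direct application of Hölder/Cauchy–Schwarz together with \eqref{eqn:SobolevIneq}; the only subtlety there is being slightly careful about which factor receives the $L^\infty$ bound so that the final powers of the seminorms match the statement (two derivatives on $\chi^h$, one on $\xi$, one on $\psi$ through the logarithmic factor).
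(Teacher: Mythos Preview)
Your steps 2 and 3 are exactly the argument the paper intends: apply H\"older in the form $L^\infty\times L^2\times L^2$ to each term of $b_0(\psi;\chi^h,\xi)$, pull out $\|\nabla\psi\|_{L^\infty}$, and then invoke the discrete logarithmic inequality \eqref{eqn:SobolevIneq}. The paper gives no detailed proof beyond saying the lemma ``follows from \eqref{eqn:SobolevIneq} and \eqref{eqn:bCont},'' and your H\"older estimate plus inverse inequality is precisely that. You are also right to flag that \eqref{eqn:SobolevIneq} applies only to discrete functions, so the bound as written really requires the \emph{first} argument to lie in $X^h$; the paper's statement is sloppy on this point, but in every application (the proof of Theorem~\ref{thm:TwoLevel}) the first slot is indeed filled by $\psi^h$ or $\Phi^h$.

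Your step 1, however, is both unnecessary and incorrect. It is unnecessary because the lemma is an estimate on $b_0$ itself, defined directly by \eqref{eqn:trilinear}; there is no need to start from $b$ and integrate by parts. It is incorrect because the identity you claim, $b(\psi;\chi^h,\xi)=b_0(\psi;\chi^h,\xi)$, is false: integrating $\Delta\psi$ by parts once produces eight terms, four of which involve second derivatives of $\xi$ rather than of $\chi^h$. The correct integration-by-parts identity is the one recorded separately as Lemma~\ref{lma:trilinear}, namely $b(\psi;\xi,\chi)=b_0(\xi;\chi,\psi)-b_0(\chi;\xi,\psi)$, which is a \emph{difference} of two $b_0$ terms with permuted arguments, not a single $b_0$. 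Drop step 1 entirely and begin directly with the H\"older bound on the integral defining $b_0$; the remainder of your argument then stands.
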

The following lemma, which corresponds to Lemma 5.6 in \cite{Fairag98}, will
be useful for proving the error bounds for \autoref{alg:TwoLevel}, by
allowing one to permute the terms of the trilinear form:
\begin{lemma} \label{lma:trilinear}
  For $\psi,\,\xi,\,\chi\in H^2_0(\Omega)$, we have
  \begin{equation}
    b(\psi; \xi, \chi) = b_0(\xi; \chi, \psi) - b_0(\chi; \xi, \psi).
    \label{eqn:eqn:trilinear}
  \end{equation}
\end{lemma}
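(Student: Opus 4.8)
The plan is to establish \eqref{eqn:eqn:trilinear} first for smooth, compactly supported functions and then to extend it to all of $H^2_0(\Omega)$ by density. Since $X=H^2_0(\Omega)$ is by definition the closure of $C^\infty_c(\Omega)$ in $\|\cdot\|_2$, and since each of the two sides of \eqref{eqn:eqn:trilinear} is a continuous trilinear form on $X\times X\times X$ --- for $b$ this is part of \autoref{lma:bounds}, while for $b_0$ it follows from H\"older's inequality together with the two-dimensional Sobolev embedding $H^2(\Omega)\hookrightarrow W^{1,4}(\Omega)$ and the equivalence of $|\cdot|_2$ and $\|\cdot\|_2$ on $X$, which give $|b_0(\xi;\chi,\psi)|\le C\,|\xi|_2\,|\chi|_2\,|\psi|_2$ --- it suffices to prove the identity for $\psi,\xi,\chi\in C^\infty_c(\Omega)$; the general case then follows by approximating each of the three arguments in $H^2_0(\Omega)$ and passing to the limit in both forms.

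So I would fix smooth, compactly supported $\psi,\xi,\chi$, write $\Delta\psi=\psi_{xx}+\psi_{yy}$, and set $A:=\xi_y\chi_x-\xi_x\chi_y$, so that
\begin{equation*}
  b(\psi;\xi,\chi)=\int_\Omega \psi_{xx}\,A\,d\vec{x}+\int_\Omega \psi_{yy}\,A\,d\vec{x},
\end{equation*}
and then integrate by parts (Green's identity), moving one $x$-derivative off $\psi$ in the first integral and one $y$-derivative off $\psi$ in the second. All boundary contributions vanish: for $\psi\in C^\infty_c(\Omega)$ trivially, and in the limiting case because $\psi\in H^2_0(\Omega)$ forces $\psi=\partial_{\vec{n}}\psi=0$, hence $\nabla\psi=0$, on $\partial\Omega$ (one could equally use $\nabla\chi|_{\partial\Omega}=0$ to kill $A$ on the boundary). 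This yields
\begin{equation*}
  b(\psi;\xi,\chi)=-\int_\Omega \psi_x\,A_x\,d\vec{x}-\int_\Omega \psi_y\,A_y\,d\vec{x}.
\end{equation*}

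It then remains to expand $A_x=\xi_{xy}\chi_x+\xi_y\chi_{xx}-\xi_{xx}\chi_y-\xi_x\chi_{xy}$ and $A_y=\xi_{yy}\chi_x+\xi_y\chi_{xy}-\xi_{xy}\chi_y-\xi_x\chi_{yy}$, to substitute them into the last display, and to sort the resulting eight terms according to whether the surviving second derivative falls on $\chi$ or on $\xi$. Read against the definition \eqref{eqn:trilinear} of $b_0$ with the arguments permuted appropriately, the four terms carrying a second derivative of $\chi$ reassemble into $b_0(\xi;\chi,\psi)$ and the four carrying a second derivative of $\xi$ reassemble into $-b_0(\chi;\xi,\psi)$, which is precisely \eqref{eqn:eqn:trilinear}. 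The proof is thus little more than one application of Green's identity together with the density reduction; the only delicate part --- and the step I would carry out most carefully, term by term against \eqref{eqn:trilinear} --- is the sign and index bookkeeping in this final regrouping.
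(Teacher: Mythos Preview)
Your approach is correct and is exactly how one proves this identity: integrate by parts once to move a derivative off $\Delta\psi$, expand, and regroup; the density argument is the right way to justify working first with smooth compactly supported functions and the continuity bounds you quote for $b$ and $b_0$ are adequate for the limit passage. The paper itself does not give a proof but simply cites Lemma~5.6 of \cite{Fairag98}, so there is no alternative argument to compare against.

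One warning, though: when you actually carry out the regrouping you single out as the delicate step, the signs come out reversed relative to the stated identity. Concretely, the $\chi$-second-derivative terms arising from $-\psi_xA_x-\psi_yA_y$ sum to
\[
-\psi_x\xi_y\chi_{xx}+\psi_x\xi_x\chi_{xy}-\psi_y\xi_y\chi_{xy}+\psi_y\xi_x\chi_{yy},
\]
which is $-b_0(\xi;\chi,\psi)$, not $+b_0(\xi;\chi,\psi)$, under the definition \eqref{eqn:trilinear}; the $\xi$-second-derivative terms likewise give $+b_0(\chi;\xi,\psi)$. (A quick check: take $\psi$ depending only on $x$, so $\psi_y=0$; then the right-hand side of \eqref{eqn:eqn:trilinear} reduces to $\int_\Omega A_x\,\psi_x$, while the left-hand side is $\int_\Omega\psi_{xx}A=-\int_\Omega\psi_xA_x$.) So what your computation really yields is $b(\psi;\xi,\chi)=b_0(\chi;\xi,\psi)-b_0(\xi;\chi,\psi)$, the negative of \eqref{eqn:eqn:trilinear}. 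This looks like a sign slip either in the lemma statement or in the transcription of $b_0$ from \cite{Fairag98}; it is harmless for the paper, since in the proof of Theorem~\ref{thm:TwoLevel} both $b_0$ terms are immediately bounded in absolute value, but you should not be surprised when the term-by-term check you plan does not land on the stated sign.
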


The following theorem gives the error bound after Step 2 of the two-level
algorithm (\autoref{alg:TwoLevel}) and is the main result of this paper. The
proof of this theorem is similar to the proof of Theorem 5.2 in \cite{Fairag98}.
%\textcolor{blue}{I eliminated $\Gamma_1$ from $C_2$.}
\begin{thm} \label{thm:TwoLevel}
  Let $\psi$ be the solution of \eqref{eqn:WeakForm} and $\psi^h$ the solution
  of \eqref{eqn:Fine}. Then $\psi^h$ satisfies
  \begin{equation}
    |\psi-\psi^h|_2 \le C_1 \inf_{\lambda^h\in X^h} |\psi-\lambda^h|_2 + C_2
      \sqrt{|\ln h|}\, |\psi - \psi^H|_1,
    \label{eqn:Error}
  \end{equation}
  where $C_1 = 2 + Re\,Ro^{-1}\, \Gamma_2 + Re^2 Ro^{-1} \Gamma_1 \|F\|_{-2}$
  and $C_2= 2 Re^2 Ro^{-1} C\,\|F\|_{-2}$.
\end{thm}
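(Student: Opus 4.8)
The plan is to follow the classical two-level argument of \cite{Fairag98}: decompose the error into an approximation part and a discrete part, estimate the discrete part by an energy/coercivity argument, and arrange the nonlinearity so that the coarse-mesh error enters only through its weaker $H^1$ seminorm. Fix an arbitrary $\lambda^h \in X^h$ and write $\psi - \psi^h = \eta - \phi^h$ with $\eta := \psi - \lambda^h$ and $\phi^h := \psi^h - \lambda^h \in X^h$. Since $|\psi - \psi^h|_2 \le |\eta|_2 + |\phi^h|_2$ and $\lambda^h$ is arbitrary, it suffices to bound $|\phi^h|_2$ and then pass to the infimum over $\lambda^h$ at the end.

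First I would derive the error equation. Testing \eqref{eqn:WeakForm} with $\chi = \phi^h$ (admissible because $X^h \subset X$) and \eqref{eqn:Fine} with $\chi^h = \phi^h$ and subtracting, then using $a(\phi^h,\phi^h) = Re^{-1}|\phi^h|_2^2$ (coercivity of $a$, where $|\cdot|_2$ is a norm on $H^2_0$), the identities $c(\phi^h,\phi^h) = 0$ and $b(\psi^H;\phi^h,\phi^h) = 0$, and $\psi - \psi^h = \eta - \phi^h$ in the $a$ and $c$ terms, one obtains
\begin{equation*}
  Re^{-1}|\phi^h|_2^2 = a(\eta,\phi^h) + c(\eta,\phi^h) + \bigl[\, b(\psi;\psi,\phi^h) - b(\psi^H;\psi^h,\phi^h)\,\bigr].
\end{equation*}
The terms $a(\eta,\phi^h)$ and $c(\eta,\phi^h)$ are bounded directly by \eqref{eqn:aCont} and \eqref{eqn:cCont}, contributing $Re^{-1}$ and $Ro^{-1}\Gamma_2$ to the coefficient of $|\eta|_2|\phi^h|_2$.

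The core is the trilinear difference. Writing $\psi^h = \lambda^h + \phi^h = \psi - \eta + \phi^h$ and using $b(\psi^H;\phi^h,\phi^h) = 0$, I would split it as
\begin{equation*}
  b(\psi;\psi,\phi^h) - b(\psi^H;\psi^h,\phi^h) = b(\psi - \psi^H;\psi,\phi^h) + b(\psi^H;\eta,\phi^h).
\end{equation*}
The term $b(\psi^H;\eta,\phi^h)$ is controlled by \eqref{eqn:bCont} together with $|\psi^H|_2 \le Re\,Ro^{-1}\|F\|_{-2}$ from \autoref{lma:FEStability}, contributing $\Gamma_1 Re\,Ro^{-1}\|F\|_{-2}$ to the coefficient of $|\eta|_2|\phi^h|_2$. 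For the genuinely two-level term $b(\psi-\psi^H;\psi,\phi^h)$ I would apply the permutation identity of \autoref{lma:trilinear} to rewrite it as $b_0(\psi;\phi^h,\psi-\psi^H) - b_0(\phi^h;\psi,\psi-\psi^H)$; the first summand is exactly of the form covered by \autoref{lma:bImproved}, and the second is bounded in the same way because the gradient of the discrete function $\phi^h$ is what appears and the inverse inequality \eqref{eqn:SobolevIneq} applies to it. Using $|\psi|_2 \le Re\,Ro^{-1}\|F\|_{-2}$ from \autoref{lma:Stability}, both summands are bounded by $C\sqrt{|\ln h|}\,Re\,Ro^{-1}\|F\|_{-2}\,|\psi-\psi^H|_1\,|\phi^h|_2$, whose sum carries the factor $2$ appearing in $C_2$.

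Collecting all the bounds, dividing by $Re^{-1}|\phi^h|_2$, and then adding $|\eta|_2$ via the triangle inequality yields
\begin{equation*}
  |\psi-\psi^h|_2 \le \bigl(2 + Re\,Ro^{-1}\Gamma_2 + Re^2 Ro^{-1}\Gamma_1\|F\|_{-2}\bigr)|\eta|_2 + 2C\,Re^2 Ro^{-1}\|F\|_{-2}\sqrt{|\ln h|}\,|\psi-\psi^H|_1,
\end{equation*}
and taking the infimum over $\lambda^h \in X^h$ (legitimate since only $\eta = \psi-\lambda^h$ remains on the right) gives \eqref{eqn:Error} with the stated $C_1$ and $C_2$. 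The main obstacle is the term $b(\psi-\psi^H;\psi,\phi^h)$: one must transfer the second-order derivative onto the discrete function $\phi^h$, via \autoref{lma:trilinear}, so that the logarithmically-weighted inverse inequality \eqref{eqn:SobolevIneq} can be used, leaving the coarse error measured only in $|\cdot|_1$; this is exactly what produces the $\sqrt{|\ln h|}$ factor and hence governs the admissible scaling between $h$ and $H$. Some care is needed to ensure that in each $b_0$ piece the discrete function occupies a slot whose gradient — not its Hessian — is the quantity estimated in $L^\infty$.
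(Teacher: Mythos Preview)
Your argument is correct and follows the same overall strategy as the paper: derive the error equation, split $\psi-\psi^h=\eta-\phi^h$, use coercivity of $a$ and the vanishing of $b(\cdot;\phi^h,\phi^h)$ and $c(\phi^h,\phi^h)$, and handle the two-level nonlinear term via the permutation identity of \autoref{lma:trilinear} together with the logarithmic inverse inequality. The only difference is the choice of add/subtract term in the trilinear splitting: the paper inserts $b(\psi;\psi^h,\chi^h)$, obtaining $b(\psi;\eta,\Phi^h)+b(\psi-\psi^H;\psi^h,\Phi^h)$, whereas you insert $b(\psi^H;\psi,\phi^h)$, obtaining $b(\psi-\psi^H;\psi,\phi^h)+b(\psi^H;\eta,\phi^h)$. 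Because $\psi$, $\psi^H$, and $\psi^h$ all satisfy the same stability bound $Re\,Ro^{-1}\|F\|_{-2}$, the two routes produce identical constants $C_1,C_2$.

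The paper's splitting is slightly cleaner for one reason: in $b(\psi-\psi^H;\psi^h,\Phi^h)$ both the second and third arguments are discrete, so after applying \autoref{lma:trilinear} each resulting $b_0$ term has a discrete function in the middle slot and \autoref{lma:bImproved} applies verbatim to both. In your splitting, the term $b_0(\phi^h;\psi,\psi-\psi^H)$ has the continuous $\psi$ in the middle slot, so \autoref{lma:bImproved} as stated does not apply directly; you need the easy variant $|b_0(\phi^h;\psi,\xi)|\le C'\|\nabla\phi^h\|_{L^\infty}|\psi|_2|\xi|_1$ followed by \eqref{eqn:SobolevIneq}. You already flag this at the end, and the argument goes through, but if you want to cite \autoref{lma:bImproved} literally twice, the paper's splitting is the cleaner choice.
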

\begin{proof}
  Subtracting \eqref{eqn:Fine} from \eqref{eqn:WeakForm} and letting
  $\chi=\chi^h \in X^h \subset X$ yields the error equation:
  \begin{equation}
    a(\psi - \psi^h,\chi^h) + b(\psi;\psi,\chi^h) - b(\psi^H;\psi^h,\chi^h)
      + c(\psi-\psi^h,\chi^h) = 0, \quad \forall \chi^h \in X^h.
      \label{eqn:ErrorEqn}
  \end{equation}
  Now, adding the terms
  \begin{equation*}
    -b(\psi;\psi^h,\chi^h) + b(\psi;\psi^h,\chi^h)
  \end{equation*}
  to \eqref{eqn:ErrorEqn} gives
  \begin{equation}
    a(\psi - \psi^h,\chi^h)
      + b(\psi;\psi-\psi^h,\chi^h) + b(\psi-\psi^H;\psi^h,\chi^h)
      + c(\psi-\psi^h,\chi^h) = 0, \quad \forall \chi^h \in X^h.
      \label{eqn:ErrorEqn2}
  \end{equation}
  Take $\lambda^h\in X^h$ arbitrary and define $e:= \psi - \psi^h = \eta -
  \Phi^h$, where $\Phi^h = \psi^h-\lambda^h$ and $\eta=\psi-\lambda^h$. Equation
  \eqref{eqn:ErrorEqn2} becomes
  \begin{equation}
    \begin{split}
      &a(\Phi^h,\chi^h)
        + b(\psi;\Phi^h,\chi^h) + c(\Phi^h,\chi^h) \quad = \\
      &\qquad a(\eta,\chi^h)
        + b(\psi;\eta,\chi^h) + b(\psi-\psi^H; \psi^h,\chi^h) + c(\eta,\chi^h), \quad \forall \chi^h \in X^h.
    \end{split}
    \label{eqn:ErrorSplit}
  \end{equation}
  Since \eqref{eqn:ErrorSplit} holds for any $\chi^h\in X^h$ it holds in
  particular for $\chi^h=\Phi^h\in X^h$, which implies
  \begin{equation}
    \begin{split}
      &a(\Phi^h,\Phi^h) + b(\psi;\Phi^h,\Phi^h) + c(\Phi^h,\Phi^h) = \\
        &\qquad a(\eta,\Phi^h) + b(\psi; \eta, \Phi^h)
        + b(\psi-\psi^H; \psi^h, \Phi^h) + c(\eta,\Phi^h).
    \end{split}
    \label{eqn:ErrorPhih}
  \end{equation}

  Note that $c(\psi,\chi) = -c(\chi,\psi)$, which implies $c(\Phi^h,\Phi^h) = 0$.
  Also, $b(\psi;\chi,\chi) = 0$ and so \eqref{eqn:ErrorPhih} becomes
  \begin{equation}
    \begin{split}
      a(\Phi^h,\Phi^h) &= a(\eta,\Phi^h) + b(\psi;\eta,\Phi^h)
        + b(\psi - \psi^H;\psi^h,\Phi^h) + c(\eta,\Phi^h).
    \end{split}
    \label{eqn:ErrorSimp}
  \end{equation}
  Now rewriting $b(\psi - \psi^H; \psi^h, \Phi^h)$ using \autoref{lma:trilinear}
  yields
  \begin{equation}
    \begin{split}
      a(\Phi^h,\Phi^h) &= a(\eta,\Phi^h) + b(\psi;\eta,\Phi^h) \\
       & + b_0(\psi^h; \Phi^h, \psi - \psi^H) + b_0(\Phi^h; \psi^h, \psi^H -
       \psi) + c(\eta,\Phi^h).
    \end{split}
    \label{eqn:Errorb0}
  \end{equation}
  Using the error bounds given in Lemmas \ref{lma:bounds},
  \ref{lma:Stability}, \ref{lma:Fine}, and \ref{lma:bImproved} in equation 
  \eqref{eqn:Errorb0} gives
 % \textcolor{blue}{I eliminated $\Gamma_1$ from RHS.  CHECK!!!!}
  \begin{align*}
    Re^{-1} |\Phi^h|_2^2 &\le Re^{-1} |\eta|_2\, |\Phi^h|_2 + \Gamma_1\, |\psi|_2\, |\eta|_2\, |\Phi^h|_2 \\
      &\quad+ 2 \, C\, |\psi^h|_2\, |\Phi^h|_2\, |\psi - \psi^H|_1
      \sqrt{|\ln(h)|} + Ro^{-1}\, \Gamma_2 |\eta|_2\, |\Phi^h|_2 \\
    &= \left(Ro^{-1}\,\Gamma_2 + Re^{-1} + \Gamma_1\, |\psi|_2\right) |\eta|_2\, |\Phi^h|_2 \\
      &\qquad + 2 \, C\, |\psi^h|_2\, |\Phi^h|_2 |\psi - \psi^H|_1 \sqrt{|\ln(h)|}.
  \end{align*}
  Simplifying by $|\Phi^h|_2$ and using the stability estimates \eqref{eqn:psi}
  in \autoref{lma:Stability} and \eqref{eqn:FEStability} in
  \autoref{lma:FEStability} gives
  \begin{equation}
    \begin{split}
    |\Phi^h|_2 &\le \left(1 + Re\, Ro^{-1}\, \Gamma_2 + Re^2 Ro^{-1} \Gamma_1\,
        \|F\|_{-2}\right) |\eta|_2 \\
      &\quad + 2 Re^2 Ro^{-1} \, C\, \|F\|_{-2}\, |\psi - \psi^H|_1
        \sqrt{|\ln(h)|}.
    \end{split}
    \label{eqn:phih}
  \end{equation}
  Adding $|\eta|_2$ to both sides of \eqref{eqn:phih} and using the triangle
  inequality gives
  \begin{equation}
    \begin{split}
    |\psi - \psi^h|_2 &\le \left(2 + Re\, Ro^{-1}\, \Gamma_2 + Re^2 Ro^{-1}
        \Gamma_1\, \|F\|_{-2}\right) |\eta|_2 \\
      &\quad + 2 Re^2 Ro^{-1} \, C\, \|F\|_{-2}\, |\psi - \psi^H|_1
        \sqrt{|\ln(h)|}.
    \end{split}
    \label{eqn:psi-psih}
  \end{equation}
  Taking the infimum over $\lambda^h \in X^h$ in \eqref{eqn:psi-psih} yields the
  estimate \eqref{eqn:Error}.
\end{proof}

In what follows, we consider both $X^h$ and $X^H$ Argyris FE spaces. We
emphasize, however, that both \autoref{alg:TwoLevel} and the error estimate in
\autoref{thm:TwoLevel} remain valid for other conforming FE spaces, e.g., the Bell
element, the Hsieh-Clough-Tocher element, or the Bogner-Fox-Schmit element.

For the Argyris triangle, we have the following inequalities, which follow from
approximation theory \cite{Bernadou94} and Theorem 6.1.1 in \cite{Ciarlet}:
\begin{equation} \label{eqn:Argyris}
  \begin{split}
    |\psi - \psi^h|_j &\le Ch^{6-j}, \\
    |\psi - \psi^H|_j &\le CH^{6-j},
  \end{split}
\end{equation}
where $j=0,1,2$ and $\psi$, the solution of \eqref{eqn:WeakForm}, is assumed to
satisfy $\psi\in H^6(\Omega) \cap H^2_0(\Omega)$.
Of course, if the solution $\psi$ does not possess the assumed regularity, the convergence rates of the FE discretization can significantly degrade.
This behavior is standard for high-order numerical methods.
\begin{crlry} \label{crl:Argyris2L}
  Let $X^h,\, X^H \subset H^2_0(\Omega)$ be Argyris finite elements. Then
  $\psi^h$, the solution of the two-level algorithm (\autoref{alg:TwoLevel})
  satisfies the following error estimate:
  \begin{equation}
    |\psi - \psi^h|_2 \le C_1 h^4 + C_2 \sqrt{|\ln(h)|}\, H^5.
    \label{eqn:Argyris2L}
  \end{equation}
\end{crlry}
\begin{proof}
  This follows directly by substituting the inequalities \eqref{eqn:Argyris}
  into \eqref{eqn:Error}.
\end{proof}

  \section{Numerical Results} \label{sec:Numerical}
  The goal of this section is twofold: first, we illustrate the computational
efficiency of the two-level method, and second, we verify the theoretical rates
of convergence developed in \autoref{sec:Errors}.  To illustrate the
computational efficiency of the two-level method, we compare solution times for
the full nonlinear one-level method and for the two-level method applied to the
SQGE. We choose coarse mesh/fine mesh pairs such that the ratio is
$\nicefrac{1}{2}$. To verify the theoretical rates of convergence, we compare
the theoretical error estimates to the observed rates of convergence from our
numerical tests. 
%For the one-level problem we rely on our original code that was
%benchmarked in \cite{Foster}.
The calculations on the coarse mesh employ our original code that was benchmarked in~\cite{Foster}.
%{\color{blue} Mediterranean Sea ?}

The FE solver was written in MATLAB (the 2010b version), with the Argyris implementation written in C. 
The nonlinear systems were solved with Newton's method; at each step the resulting linear systems were solved with UMFPACK. 
The numerical tests were run on a Mac Pro with 16 gigabytes of RAM and two quad-core Intel Xeon processors. 
%The version of MATLAB used is 2010b.

First, we apply the two-level method to the SQGE \eqref{eqn:Streamfunction}
with $Re=Ro=1$ and exact solution
\begin{equation}
  \psi(x,y) = \bigl(\sin \left(4\pi x \right) \sin \left( 4\pi y \right) \bigr)^2.
  \label{eqn:2LExact}
\end{equation}
The homogeneous boundary conditions are $\psi=\dfrac{\partial
\psi}{\partial \vec{n}}=0$ and the forcing function $F$ corresponds to the exact
solution \eqref{eqn:2LExact}. 
%These boundary conditions and exact solution will
%be used in all the two-level tests that follow.

\subsection{Practical Considerations}
A key part of two-level algorithms is accessing a previous coarse mesh solution,
i.e., finding the parent element given a child element. This step can negate any
performance benefits if not implemented wisely. Indeed, let $ n $ be the number
of elements in the FE discretization. For the unit square, a na\"{i}ve search
across every element takes $ O(n/2) $ operations. This procedure may be improved
with a binary search, which is summarized in \autoref{alg:TwoLevelLookup}.

We note that every element on the fine mesh corresponds to exactly one element
on the coarse mesh. However, a coarse mesh element may correspond to multiple
elements on the fine mesh.

\begin{algorithm}[H]
  \caption{ Given an element on the fine mesh, determine the parent element on
  the coarse mesh.}
  \label{alg:TwoLevelLookup}
  Before examining the fine mesh, sort the coarse mesh elements by their
  centroid values.
  \begin{enumerate}[Step 1:]
    \item Select an element on the fine mesh and compute its centroid.
    \item Perform a binary search across the coarse mesh elements until
      the difference between the $ x $-values of the fine mesh centroid
      and coarse mesh centroids is less than $ H $, the coarse mesh step
      size. There should be many elements that fit this condition; save
      them as a list.
    \item Search through this list until we find the correct coarse mesh
      element (that is, the centroid of the fine-mesh element is an
      interior point of the correct coarse mesh element).
  \end{enumerate}
\end{algorithm}

For the considered unit square, the binary search will examine on average
$\log(n)$ elements, while the linear search component involves at most
$\sqrt{n}/2$ elements. Therefore the search requires a $O(\sqrt{n}/2)$ number of
element checks. Profiling results indicate that using
\autoref{alg:TwoLevelLookup} to identify parent elements takes much less time
than either setting up or solving the systems, so this approach is fast enough
that lookup time does not contribute significantly to overall solution time.

\subsection{Computational Efficiency}\label{sse:Efficiency}
To illustrate the computational efficiency of the two-level method, we compare
the simulation time for the standard one-level method (i.e., the full nonlinear
system, without the two-level method) with the simulation time for the two-level
method.

In \autoref{tab:Efficiency}, the $L^2$-norm of the error ($e_{L^2}$),  the
$H^1$-norm of the error ($e_{H^1}$), the
$H^2$-norm of the error ($e_{H^2}$) and the simulation times are listed for
various mesh sizes. 
%{\color{blue} DAVE: Can we include the $H^1$-norm of the error ($e_{H^1}$) in \autoref{tab:Efficiency}?} 
For each fine mesh, we choose a coarse mesh that ensures the
same order of magnitude for the errors in the one-level and two-level methods.
For small values of the fine mesh size, $h$, the two-level method was
significantly faster than the one-level method. The errors in the $H^2$-norm
were nearly identical, while the error in the $L^2$-norm were generally of the
same order of magnitude. We also note that the tolerance in Newton's method
seems to cause a plateau in the $L^2$-norm of the error. The results in
\autoref{tab:Efficiency} are illustrated graphically in
\autoref{fig:Efficiency}. In this figure the simulation times of the one-level
method (green) and of the two-level method (blue) are displayed for all the
pairs $(h,H)$ in \autoref{tab:Efficiency}.  \autoref{fig:Efficiency} clearly
shows that as the number of degrees of freedom (DoFs) increases, the
computational efficiency of the two-level method increases as well.

\begin{table}
  \begin{center}
    {\small
    \begin{tabular}{|c|c|c|c|c|c|c|c|}
      \hline
        $H$     & $h$        & DoFs, $H$ & DoFs, $h$ & $e_{L^2}$              & $e_{H^1}$              & $e_{H^2}$             & time, $s$ \\
      \hline
      $-$       & $0.05146$  & $-$       & $4362$    & $4.286\times 10^{-8}$  & $7.767 \times 10^{-6} $ & $1.648\times 10^{-3}$ & $3.328$ \\
      $0.1083$  & $0.05146$  & $1158$    & $4362$    & $1.092\times 10^{-7}$  & $9.714 \times 10^{-6} $ & $1.709\times 10^{-3}$ & $2.372$ \\
      $-$       & $0.02561$  & $-$       & $16926$   & $5.748\times 10^{-10}$ & $2.236 \times 10^{-7} $ & $1.009\times 10^{-4}$ & $19.92$ \\
      $0.05146$ & $0.02561$  & $4362$    & $16926$   & $7.691\times 10^{-10}$ & $2.345 \times 10^{-7} $ & $1.016\times 10^{-4}$ & $11.82$ \\
      $-$       & $0.01597$  & $-$       & $43074$   & $4.751\times 10^{-11}$ & $2.688 \times 10^{8} $ & $1.793\times 10^{-5}$ & $55.69$ \\
      $0.03384$ & $0.01597$  & $10983$   & $43074$   & $5.267\times 10^{-11}$ & $2.732 \times 10^{-8} $ & $1.797\times 10^{-5}$ & $33.19$ \\
      $-$       & $0.01277$  & $-$       & $66678$   & $8.66\times 10^{-12}$  & $6.611 \times 10^{-9} $ & $6.207\times 10^{-6}$ & $102.4$ \\
      $0.02561$ & $0.01277$  & $16926$   & $66678$   & $9.686\times 10^{-12}$ & $6.676 \times 10^{-9} $ & $6.217\times 10^{-6}$ & $59.03$ \\
      $-$       & $0.009659$ & $-$       & $116614$  & $3.876\times 10^{-12}$ & $2.135 \times 10^{-9} $ & $2.382\times 10^{-6}$ & $161.7$ \\
      $0.02035$ & $0.009659$ & $29501$   & $116614$  & $6.836\times 10^{-12}$ & $2.15 \times 10^{-9}  $ & $2.385\times 10^{-6}$ & $95.93$ \\
      $-$       & $0.007959$ & $-$       & $170598$  & $4.791\times 10^{-12}$ & $7.945 \times 10^{-10}$ & $1.111\times 10^{-6}$ & $325.1$ \\
      $0.01597$ & $0.007959$ & $43074$   & $170598$  & $9.087\times 10^{-12}$ & $8.005 \times 10^{-10}$ & $1.112\times 10^{-6}$ & $172.3$ \\
      $-$       & $0.006854$ & $-$       & $230574$  & $1.79\times 10^{-11}$  & $4.109 \times 10^{-10}$ & $6.16 \times 10^{-7}$ & $401.7$ \\
      $0.01436$ & $0.006854$ & $58131$   & $230574$  & $1.3\times 10^{-11}$   & $4.138 \times 10^{-10}$ & $6.163\times 10^{-7}$ & $219.5$ \\
      $-$       & $0.006374$ & $-$       & $264678$  & $3.912\times 10^{-11}$ & $3.412 \times 10^{-10}$ & $3.846\times 10^{-7}$ & $559.7$ \\
      $0.01277$ & $0.006374$ & $66678$   & $264678$  & $2.309\times 10^{-11}$ & $2.766 \times 10^{-10}$ & $3.848\times 10^{-7}$ & $291.9$ \\
      $-$       & $0.005264$ & $-$       & $389994$  & $3.85\times 10^{-11}$  & $2.417 \times 10^{-10}$ & $2.086\times 10^{-7}$ & $753.4$ \\
      $0.01101$ & $0.005264$ & $98133$   & $389994$  & $6.495\times 10^{-11}$ & $4.156 \times 10^{-10}$ & $2.087\times 10^{-7}$ & $397.7$ \\
      \hline
    \end{tabular}}
  \end{center}
  \caption{Comparison of one-level and two-level methods: the $L^2$-norm of the
    error ($e_{L^2}$), the $ H^1 $-norm of the error ($ E_{H^1} $), and the
    $H^2$-norm of the error ($e_{H^2}$) with simulation times.}
  \label{tab:Efficiency}
\end{table}

\begin{center}
  \begin{figure}
    \begin{center}
    \includegraphics[width=4in,natwidth=610,natheight=642]{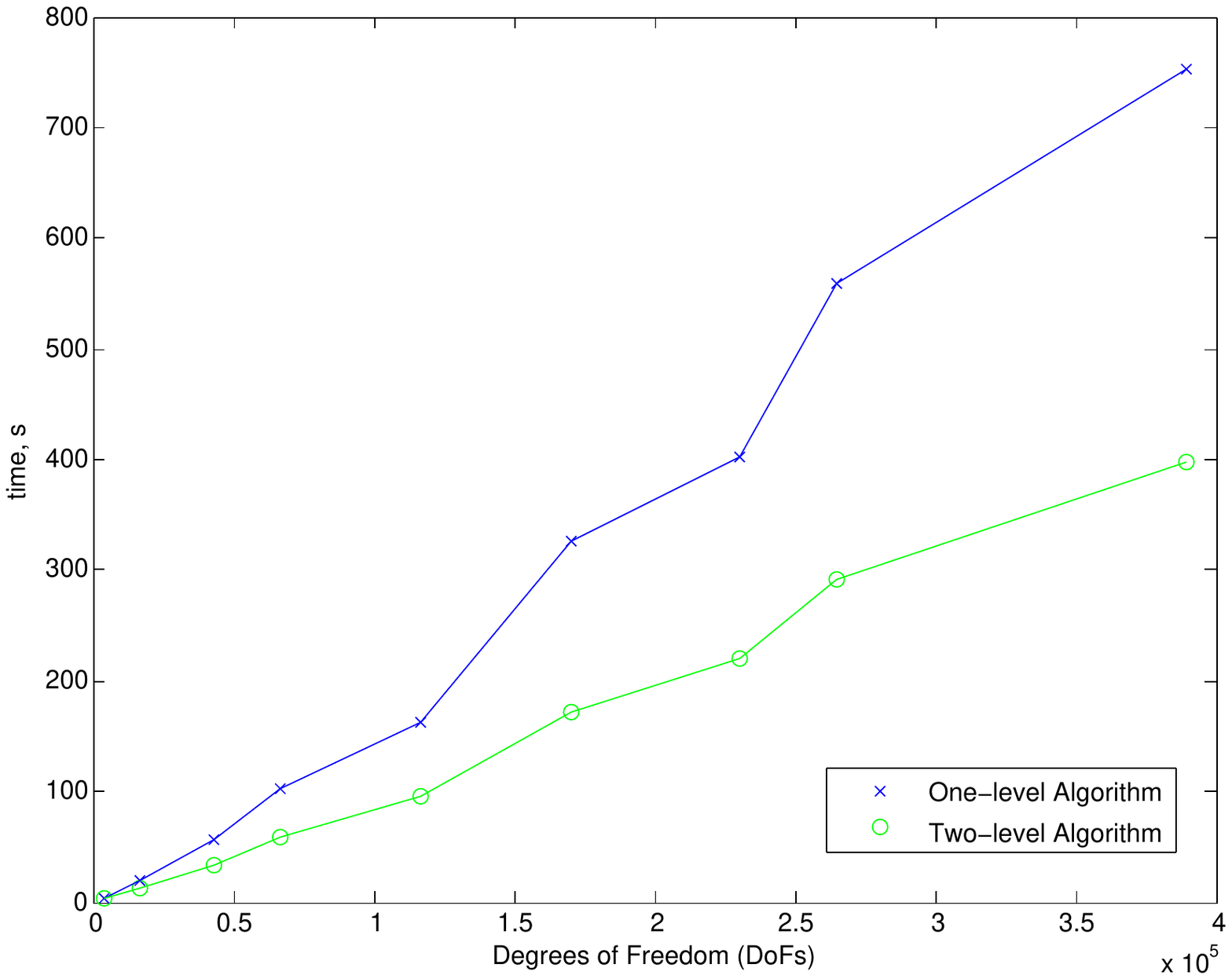}
    \includegraphics[width=4in,natwidth=610,natheight=642]{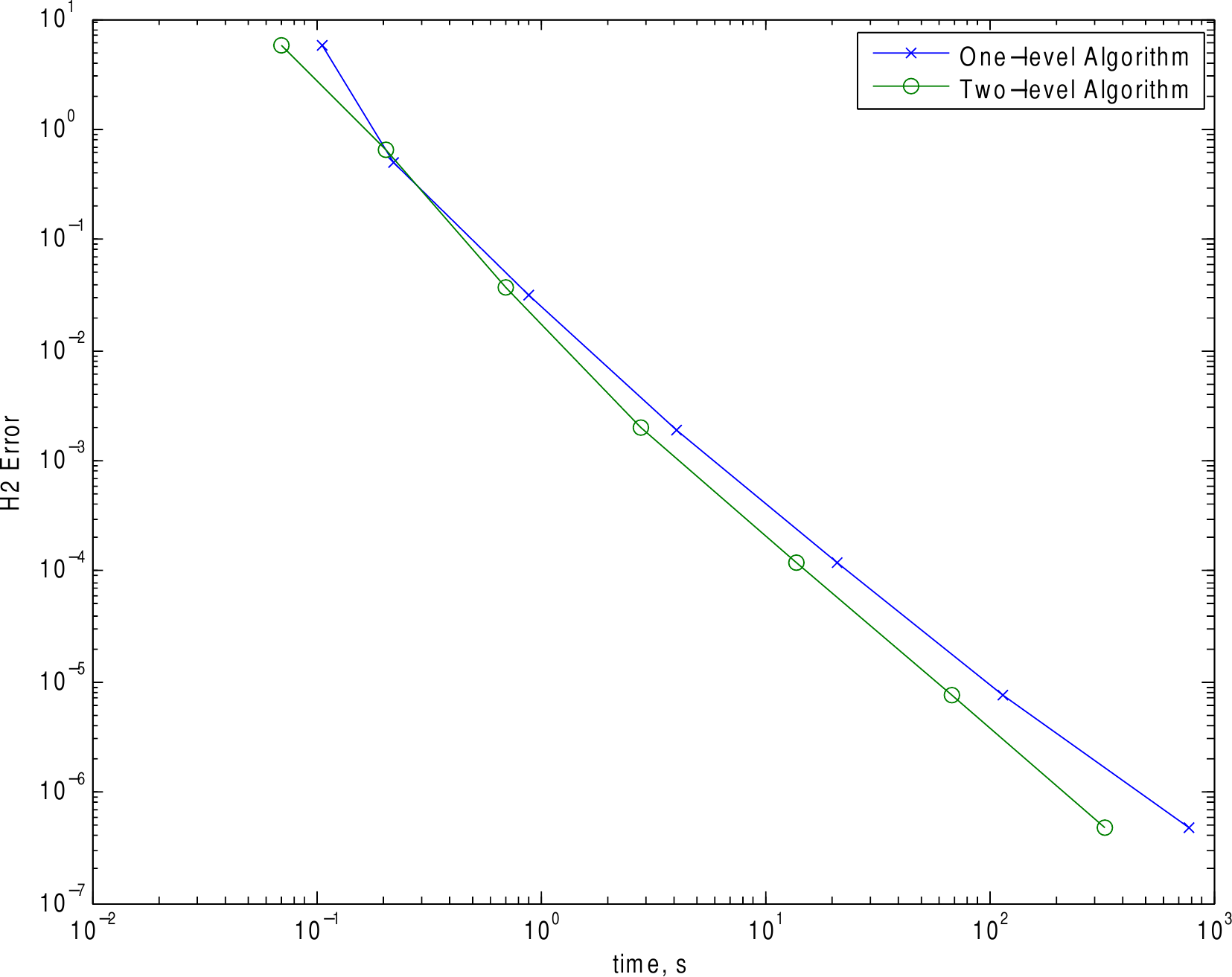}
    \caption{
    	The simulation times of the one-level method (green) and of the
      two-level method (blue) are displayed for all the pairs $(h,H)$ in
      \autoref{tab:Efficiency}.
      Top: simulation time as a function of the number of degrees of freedom.
      Bottom: log-log plot of the $H^2$-error as a function of the simulation time.
      }
    \label{fig:Efficiency}
    \end{center}
  \end{figure}
\end{center}

\subsection{Rates of Convergence}\label{sse:Rates}
The goal of this subsection is to numerically verify the theoretical rates of
convergence in estimate \eqref{eqn:Argyris2L} of Corollary \ref{crl:Argyris2L}. Unlike
the theoretical error estimates for the one-level method developed in
\cite{Foster}, for the two-level method we must verify rates of convergence
for two different mesh sizes: the fine mesh size, $h$, and the coarse mesh size, $H$.

To numerically verify the theoretical rate of convergence given in estimate \eqref{eqn:Argyris2L} with respect to $H$, we fix $h$ to a small value and we vary $H$.
Thus, the total error in estimate \eqref{eqn:Argyris2L} will be dominated by the $H$
term, i.e., the total error will be of order $O(H^5)$. In \autoref{tab:TwoLevelH},
we fix $h=0.0063$ and we vary $H$. The error in the $L^2$-norm ($e_{L^2}$), the
error in the $H^1$-norm ($e_{H^1}$), the
error in the $H^2$-norm ($e_{H^2}$), and the rate of convergence with respect to
$H$ are listed in \autoref{tab:TwoLevelH}. The rate of convergence with respect to $H$ of the error in the $H^2$-norm follows the
theoretical rate predicted in estimate \eqref{eqn:Argyris2L} (i.e., fifth-order). For the
last mesh pair, however, the rate of convergence appears to drop off. This
occurs because, for small values of $H$, the total error in
estimate \eqref{eqn:Argyris2L} is not dominated anymore by the $H$ term.
We note that in \autoref{tab:TwoLevelH} the number of $DoFs$ for the fine mesh
varies even though the mesh size $h$ is constant. The reason for this is that for the coarse
mesh a Delauney triangulation is performed around a grid spacing corresponding
to $H$ and then a red refinement is performed to obtain the fine mesh corresponding to $h$. 
Thus, differences in the Delauney triangulations for various mesh sizes $H$ yield differences in the fine meshes of mesh size $h$.

To numerically verify the theoretical rate of convergence given in estimate \eqref{eqn:Argyris2L} with respect to $h$, we must proceed with caution. The reason is
that a straightforward approach would fix $H$ and let $h$ go to zero. This
approach, however, would fail, since the $H$ term would eventually dominate the
total error. To avoid this, we consider the following scaling between the mesh
sizes:
\begin{equation}
  H = C\, h,
  \label{eqn:C-Scaling}
\end{equation}
where $C>1$. The scaling in \eqref{eqn:C-Scaling} implies that the total error
in estimate \eqref{eqn:Argyris2L} is of order $O(h^4)$. Indeed, the second term on the
right hand side of estimate \eqref{eqn:Argyris2L} now scales as follows:
\begin{equation}
  \begin{split}
    C_2 \sqrt{|\ln(h)|}\, H^5 &\approx C_2 C \sqrt{|\ln(h)|}\, h^5 
    \approx O(h^4),
  \end{split}
  \label{eqn:Balance}
\end{equation}
where in the last relation in \eqref{eqn:Balance} we used the fact that
$\sqrt{|\ln(h)|}\, h \rightarrow 0$ when $h \rightarrow 0$ (which follows from
l'Hospital's rule).

\begin{remark}
  We emphasize that the scaling in \eqref{eqn:C-Scaling} is not needed in the
  two-level algorithm. We only use it in this subsection to monitor the
  convergence rate with respect to $h$.
\end{remark}

In this subsection, we consider $C=2$ in \eqref{eqn:C-Scaling}. We note,
however, that any other constant $C>1$ could be used in \eqref{eqn:C-Scaling}.
With this choice, we are now ready to numerically verify the theoretical rate
of convergence  given in estimate \eqref{eqn:Argyris2L} with respect to $h$, which, as
shown in \eqref{eqn:Balance}, will be of order $O(h^4)$. In
\autoref{tab:TwoLevelh}, for various mesh size pairs $(H=2h, h)$, we list the
$L^2$-norm of the error ($e_{L^2}$), the $H^1$-norm of the error ($e_{H^1}$), 
the $H^2$-norm of the error ($e_{H^2}$),
and the rate of convergence. The rate of convergence with respect to $h$ of the error in the $H^2$-norm follows the theoretical
rate predicted in estimate \eqref{eqn:Argyris2L} (i.e., fourth-order).

\begin{table}
  {\tiny
  \begin{center}
    \begin{tabular}{|c|c|c|c|c|c|c|c|c|c|}
    \hline
    $H$ &   $h$  &  DoFs, $ H $ & DoFs, $ h $ & $e_{L^2}$ & $L^2$ order
      & $e_{H^1}$ & $H^1$ order & $e_{H^2}$ & $H^2$ order \\
    \hline
    $\nicefrac{1}{2}$   & $\nicefrac{1}{256}$   & $106$     & $1,183,238$
        & $1.20\times 10^{-2}$  & $-$     & $1.60\times 10^{-1}$  & $-$
        & $5.18\times 10^{0}$   & $-$ \\
    $\nicefrac{1}{4}$   & $\nicefrac{1}{256}$   & $350$     & $740,870$
        & $2.53\times 10^{-2}$  & $-1.08$ & $4.75\times 10^{-1}$  & $-1.57$
        & $11.7\times 10^{1}$   & $-1.17$ \\
    $\nicefrac{1}{8}$   & $\nicefrac{1}{256}$  & $838$     & $851,462$
        & $6.67\times 10^{-4}$  & $5.24$  & $1.36\times 10^{-2}$  & $5.12$
        & $7.84\times 10^{-1}$  & $3.89$ \\
    $\nicefrac{1}{16}$  & $\nicefrac{1}{256}$  & $3,542$   & $782,342$
        & $5.54\times 10^{-6}$  & $6.91$  & $2.32\times 10^{-4}$  & $5.87$
        & $2.79\times 10^{-2}$  & $4.81$ \\
    $\nicefrac{1}{32}$  & $\nicefrac{1}{256}$  & $12,622$  & $769,094$
        & $1.70\times 10^{-8}$  & $8.35$  & $3.38\times 10^{-6}$  & $6.10$
        & $8.41\times 10^{-4}$  & $5.05$ \\
    $\nicefrac{1}{64}$  & $\nicefrac{1}{256}$ & $48,746$  & $778,742$
        & $9.38\times 10^{-11}$ & $7.50$  & $3.82\times 10^{-8}$  & $6.47$
        & $2.31\times 10^{-5}$  & $5.18$ \\
    $\nicefrac{1}{128}$ & $\nicefrac{1}{256}$ & $195,586$ & $781,766$
        & $1.63\times 10^{-10}$ & $-0.80$ & $4.28\times 10^{-9}$  & $3.16$
        & $1.16\times 10^{-5}$  & $1.00$ \\
      \hline
    \end{tabular}
  \end{center}
  \caption{Two-level method: the $L^2$-norm of the error ($e_{L^2}$), the
  $H^2$-norm of the error ($e_{H^2}$), and the convergence rate with respect to
  $H$.}
  \label{tab:TwoLevelH}
  \begin{center}
    \begin{tabular}{|c|c|c|c|c|c|c|c|c|c|}
    \hline
    $H$ &   $h$  &  DoFs, $ H $ & DoFs, $ h $ & $e_{L^2}$ & $L^2$ order
      & $e_{H^1}$ & $H^1$ order & $e_{H^2}$ & $H^2$ order \\
    \hline
    $\nicefrac{1}{2}$   & $\nicefrac{1}{4}$   & $106$     & $350$
        & $2.58\times 10^{-2}$  & $-$     & $9.65\times 10^{-1}$  & $-$
        & $5.04\times 10^{1}$   & $-$ \\
    $\nicefrac{1}{4}$   & $\nicefrac{1}{8}$   & $350$     & $838$
        & $2.57\times 10^{-2}$  & $0.01$  & $5.63\times 10^{-1}$  & $0.78$
        & $2.54\times 10^{1}$   & $1.04$ \\
    $\nicefrac{1}{8}$   & $\nicefrac{1}{16}$  & $838$     & $3,542$
        & $6.64\times 10^{-4}$  & $5.27$  & $1.44\times 10^{-2}$  & $5.29$
        & $1.15\times 10^{0}$   & $4.42$ \\
    $\nicefrac{1}{16}$  & $\nicefrac{1}{32}$  & $3,542$   & $12,622$
        & $5.57\times 10^{-6}$  & $6.90$  & $2.89\times 10^{-4}$  & $5.64$
        & $6.04\times 10^{-2}$  & $4.25$ \\
    $\nicefrac{1}{32}$  & $\nicefrac{1}{64}$  & $12,622$  & $48,746$
        & $1.88\times 10^{-8}$  & $8.21$  & $5.89\times 10^{-6}$  & $5.62$
        & $3.36\times 10^{-3}$  & $4.17$ \\
    $\nicefrac{1}{64}$  & $\nicefrac{1}{128}$ & $48,746$  & $195,586$
        & $1.37\times 10^{-10}$ & $7.10$  & $1.36\times 10^{-7}$  & $5.44$
        & $1.83\times 10^{-4}$  & $4.20$ \\
    $\nicefrac{1}{128}$ & $\nicefrac{1}{256}$ & $195,586$ & $781,766$
        & $2.06\times 10^{-10}$ & $-0.58$ & $4.35\times 10^{-9}$  & $4.96$
        & $1.16\times 10^{-5}$  & $3.98$ \\
    \hline
    \end{tabular}
  \end{center}
  \caption{Two-level method: the $L^2$-norm of the error ($e_{L^2}$), the
  $H^2$-norm of the error ($e_{H^2}$), and the convergence rate with respect to
  $h$.}
  \label{tab:TwoLevelh}
}
\end{table}

To investigate the performance of the two-level method in a more realistic setting, we consider the same setting as that used in Test 2 in~\cite{Cascon}.
The exact solution is given by
$$ \psi(x,y) = \left(\left(1 - \dfrac{x}{3}\right) \left(1 - e^{-20 x}\right)
       \sin\left(\pi y\right)\right)^2, $$
where the computational domain is the $ [0, 3] \times [0, 1] $ rectangle.
The exact solution displays a sharp boundary layer, which is similar to the western boundary layer encountered in the large scale ocean circulation in the northern hemisphere~\cite{Cascon,Vallis06,Foster}.
Furthermore, we use the parameter values $ Re = 5 $ and $ Ro = 10^{-4}$, which are similar to the realistic values used in \cite{delSastre04,Galan-del-Sastre2004,Galan-del-Sastre2008,Foster} for the Mediterranean Sea.

In \autoref{tab:TwoLevelh_test6}, for various mesh size pairs $(H \approx 2h, h)$, we list the
$L^2$-norm of the error ($e_{L^2}$), the $H^1$-norm of the error ($e_{H^1}$), 
the $H^2$-norm of the error ($e_{H^2}$),
and the rate of convergence. The rate of convergence with respect to $h$ of the error in the $H^2$-norm follows the theoretical
rate predicted in estimate \eqref{eqn:Argyris2L} (i.e., fourth-order).

\begin{table}
{\tiny
  \begin{center}
    \begin{tabular}{|c|c|c|c|c|c|c|c|c|c|}
    \hline
    $H$                    & $h$                      & DoFs, $H$ & DoFs, $h$ & $e_{L^2}$                & $L^2$ order & $e_{H^1}$               & $H^1$ order & $e_{H^2}$               & $H^2$ order \\
    \hline
    $1.9597 \times 10^{-1}$ & $9.7985 \times 10^{-2}$ & $1001$    & $3706$    & $4.8400 \times 10^{-4}$  & $-$         & $2.4799 \times 10^{-2}$ & $-$         & $2.1768$                & $-$      \\
    $6.3424 \times 10^{-2}$ & $3.1712 \times 10^{-2}$ & $8517$    & $33210$   & $2.2665 \times 10^{-6}$  & $4.7547$    & $2.5353 \times 10^{-4}$ & $4.0626$    & $5.5286 \times 10^{-2}$ & $3.2560$ \\
    $3.8175 \times 10^{-2}$ & $1.9087 \times 10^{-2}$ & $23053$   & $90794$   & $3.3150 \times 10^{-7}$  & $3.7866$    & $4.9391 \times 10^{-5}$ & $3.2220$    & $1.3623 \times 10^{-2}$ & $2.7592$ \\
    $2.7344 \times 10^{-2}$ & $1.3672 \times 10^{-2}$ & $44573$   & $176314$  & $2.1466 \times 10^{-8}$  & $8.2030$    & $4.8908 \times 10^{-6}$ & $6.9301$    & $2.6547 \times 10^{-3}$ & $4.9011$ \\
    $2.1265 \times 10^{-2}$ & $1.0633 \times 10^{-2}$ & $73158$   & $290094$  & $3.0304 \times 10^{-9}$  & $7.7868$    & $1.1985 \times 10^{-6}$ & $5.5934$    & $9.6684 \times 10^{-4}$ & $4.0174$ \\
    $1.7385 \times 10^{-2}$ & $8.6926 \times 10^{-3}$ & $109051$  & $433106$  & $5.6666 \times 10^{-10}$ & $8.3230$    & $3.4166 \times 10^{-7}$ & $6.2297$    & $3.8517 \times 10^{-4}$ & $4.5686$ \\
    \hline
    \end{tabular}
  \end{center}
  \caption{Two-level method: the $L^2$-norm of the error ($e_{L^2}$), the
  $H^2$-norm of the error ($e_{H^2}$), and the convergence rate with respect to
  $h$.}
  \label{tab:TwoLevelh_test6}
  }
\end{table}

  \section{Conclusions} \label{sec:Conclusions}
  In this paper, we proposed a two-level FE discretization of the (nonlinear)
stationary QGE in the pure streamfunction formulation. The two-level algorithm consists of two
steps. In the first step, the nonlinear system is solved on a coarse mesh. In
the second step, the nonlinear system is linearized around the approximation
found in the first step, and the resulting linear system is solved on the fine
mesh.

Rigorous error estimates for the two-level FE discretization were derived. These
estimates are optimal in the following sense: for an appropriately chosen
scaling between the coarse mesh size, $H$, and the fine mesh size, $h$, the error in the
two-level method is of the same order as the error in the standard one-level
method (i.e., solving the nonlinear system directly on the fine mesh).

Numerical experiments for the two-level algorithm with the Argyris element
were also carried out. The numerical results verified the theoretical error
estimates, both with respect to the coarse mesh size, $H$, and the fine mesh
size, $h$. 
Furthermore, the numerical results showed that, for an appropriate scaling between the coarse and fine mesh sizes, the two-level method significantly decreases the computational time of the standard one-level method.
%Additionally, we demonstrate the efficacy of the two-level
%algorithm when applied to SQGE with realistic parameter values on a realistic
%domain, the Mediterranean Sea.

%{\color{blue}
We plan to extend this study in several directions.
We will treat the case of multiply connected domains~\cite{Myers,Gunzburger89,gunzburger1988finite,gunzburger1988onfinite} in order to allow the numerical investigation of more realistic computational domains (e.g., islands in the Mediterranean Sea and in the North Atlantic).
We will  also consider the time-dependent QGE and the two-layer QGE (which will allow the study of stratification effects).
Finally, we plan to investigate various preconditioning techniques to improve the performance of the linear solvers used in this report (see~\cite{elman2005finite} for the NSE in the primitive variable formulation and \cite{fairag2012block} for the NSE in the streamfunction-vorticity formulation).
%}
%  \section{Future Work} \label{sec:Future}
%  \input{Future.tex}

	%\bibliographystyle{siam}
  \bibliographystyle{plain}%model1-num-names}
  \bibliography{QGE,comprehensive_bibliography}
\end{document}